\theoremstyle{plain}
\newtheorem{theorem}{Theorem}[section]
\newtheorem{lemma}[theorem]{Lemma}
\newtheorem{corollary}[theorem]{Corollary}
\newtheorem{remark}[theorem]{Remark}
\newtheorem{example}[theorem]{Example}
\newtheorem{definition}[theorem]{Definition}
\title{A brief note on the Karhunen-Lo\`{e}ve expansion}
\author{Alen~Alexanderian\thanks{North Carolina State University, Raleigh, NC, USA.
         {email:~alexanderian@ncsu.edu}}}
\newcommand{\R}{\mathbb{R}}
\newcommand{\eps}{\varepsilon}
\newcommand{\F}{\mathcal{F}}
\newcommand{\B}{\mathcal{B}}
\newcommand{\var}[1]{{\mathrm{Var}}\left[ {#1} \right]}
\newcommand{\ave}[1]{{\mathrm{E}}\left[ {#1} \right]}
\newcommand{\ip}[2]{\left\langle {#1}, {#2} \right\rangle}
\newcommand{\norm}[1]{||{#1}||}
\newcommand{\Norm}[2]{||{#1}||_{\scriptscriptstyle{#2}}}
\begin{document}

\maketitle 

\begin{abstract}
We provide a detailed derivation of the  Karhunen--Lo\`{e}ve expansion of a
stochastic process. We also discuss briefly Gaussian processes, and provide a simple
numerical study for the purpose of illustration.
\end{abstract}
\tableofcontents

\section{Introduction}
The purpose of this brief note is to provide a self-contained coverage of the
idea of the Karhunen--Lo\`{e}ve (KL) expansion of a stochastic process.  The
writing of this note was motivated by being exposed to the many applications of
the KL expansion in uncertainty propagation through dynamical systems with
random parameter functions; see e.g.,~\cite{knio,Ghanem,Xiu10,Smith13}.  Since
a clear and at the same time rigorous coverage of the KL exapnsion is not so
simple to find in the literature, here we provide a simple account of the
theoretical basis for the KL expansion, including a detailed proof of
convergence.  We will see that the KL expansion is obtained through an
interesting application of the Spectral Theorem for compact normal operators,
in conjunction with Mercer's theorem which connects the spectral representation
of a Hilbert-Schmidt integral operator to the corresponding Hilbert-Schmidt
kernel.

We begin by recalling some functional analytic basics on compact operators in
Section~\ref{sec:compact}. The material in that section are classical and can
be found in many standard textbooks on the subject; see e.g.,~\cite{naylorsell}
for an accessible presentation.  Next, Mercer's Theorem is recalled in
Section~\ref{sec:mercer}.  Then, we recall some basics regarding stochastic
processes in Section~\ref{sec:stoc-proc}. In that section, a basic result
stating the equivalence of mean-square continuity of a stochastic process and
the continuity of the corresponding autocorrelation function is mentioned also.
In Section~\ref{sec:KL}, we discuss in detail KL expansions of centered
mean-square continuous stochastic processes including a proof of convergence.
Finally, in Section~\ref{sec:example}, we provide a numerical example where the
KL expansion of a Gaussian random field is studied. 

\section{Preliminaries on compact operators} \label{sec:compact}
Let us begin by recalling the notion of precompact and
relatively compact sets. 
\begin{definition}(Relatively Compact)\\
Let $X$ be a metric space; $A \subseteq X$ is relatively compact
in $X$, if $\bar{A}$ is compact in $X$.
\end{definition}
\begin{definition}(Precompact)\\
Let $X$ be a metric space; $A \subseteq X$ is precompact (also
called totally bounded) if
for every $\epsilon > 0$, there exist finitely many points
$x_1, \ldots , x_N$ in $A$ such that
$\displaystyle{\cup_1^N B(x_i, \epsilon)}$  covers $A$.
\end{definition}
The following Theorem shows that when we are working
in a complete metric space, precompactness and relative
compactness are equivalent.
\begin{theorem}
Let $X$ be a metric space. If $A \subseteq X$ is relatively compact
then it is precompact. Moreover, if $X$ is complete then the converse
holds also.
\end{theorem}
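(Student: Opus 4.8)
The plan is to prove the two implications separately, leaning on the sequential characterization of compactness in metric spaces.

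For the forward direction, I would start from the compactness of $\bar{A}$ and deduce total boundedness of $A$ directly. Given $\epsilon > 0$, the open cover $\{B(x,\epsilon/2) : x \in \bar{A}\}$ of the compact set $\bar{A}$ admits a finite subcover with centers $x_1,\dots,x_N \in \bar{A}$. Since each $x_i$ lies in the closure of $A$, I can pick $a_i \in A$ with $d(x_i,a_i) < \epsilon/2$, and the triangle inequality then gives $A \subseteq \bar{A} \subseteq \bigcup_{i=1}^N B(a_i,\epsilon)$. As $\epsilon$ was arbitrary, $A$ is precompact. The only subtlety is that the stated definition of precompactness demands the centers lie in $A$, which is exactly why the small perturbation of the $x_i$ to points $a_i$ is needed.

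For the converse, assume $X$ is complete and $A$ is precompact; the goal is to show $\bar{A}$ is compact, and since we are in a metric space it suffices to show $\bar{A}$ is sequentially compact. Given a sequence $(y_n) \subseteq \bar{A}$, I first replace it by a sequence $(a_n) \subseteq A$ with $d(y_n,a_n) < 1/n$, so that a convergent subsequence of $(a_n)$ forces one for $(y_n)$. Then comes the heart of the argument: a nested-subsequence (diagonal) extraction. Using total boundedness with $\epsilon = 1/k$ for $k=1,2,\dots$, cover $A$ by finitely many balls of radius $1/k$; one such ball must contain $a_n$ for infinitely many indices, producing a decreasing chain of infinite index sets $S_1 \supseteq S_2 \supseteq \cdots$ with $d(a_n,a_m) < 2/k$ whenever $n,m \in S_k$. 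Choosing $n_1 < n_2 < \cdots$ with $n_k \in S_k$ yields a Cauchy subsequence $(a_{n_k})$.

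Finally, completeness of $X$ gives $a_{n_k} \to z$ for some $z \in X$, and since $\bar{A}$ is closed, $z \in \bar{A}$; hence $(y_{n_k})$ also converges in $\bar{A}$. This establishes sequential compactness, hence compactness, of $\bar{A}$, i.e.\ relative compactness of $A$. I expect the diagonal extraction of the Cauchy subsequence to be the main obstacle — essentially the only step with real content — together with the appeal to the equivalence of compactness and sequential compactness for metric spaces, which I would either cite as standard or recall briefly.
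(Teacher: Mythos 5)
Your argument is correct. Note that the paper states this result only as recalled classical background and gives no proof of its own, so there is nothing to compare against; your proof is the standard one. Both halves are sound: the forward direction correctly handles the subtlety that the definition in the paper requires the centers of the $\epsilon$-balls to lie in $A$ (the perturbation of the $x_i \in \bar{A}$ to nearby $a_i \in A$ is exactly what is needed), and the converse correctly reduces to sequential compactness of $\bar{A}$, with the nested extraction of index sets $S_1 \supseteq S_2 \supseteq \cdots$ yielding a Cauchy subsequence and completeness supplying the limit, which lies in $\bar{A}$ because $\bar{A}$ is closed. The appeal to the equivalence of compactness and sequential compactness for metric spaces is standard and appropriately flagged.
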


Then, we define a compact operator as below.
\begin{definition} \label{compact}
Let $X$ and $Y$ be two normed linear spaces and $T:X \to Y$ a linear
map between $X$ and $Y$. $T$ is called a compact operator
if for all bounded sets $E \subseteq X$, $T(E)$ is relatively
compact in $Y$.
\end{definition}

By the above definition~\ref{compact}, if $E \subset X$ is a
bounded set, then $\overline{T(E)}$ is compact in $Y$.
The following basic result shows a couple of different ways of 
looking at compact operators.
\begin{theorem} \label{thm:compact-basic}
Let $X$ and $Y$ be two normed linear spaces; suppose $T:X \to Y$,
is a linear operator. Then the following are equivalent.
\begin{enumerate}
\item $T$ is compact.
\item The image of the open unit ball under $T$ is relatively compact in $Y$.
\item For any bounded sequence $\{x_n\}$ in $X$, there exist
      a subsequence $\{Tx_{n_k}\}$ of $\{Tx_n\}$ that converges
      in $Y$.
\end{enumerate}
\end{theorem}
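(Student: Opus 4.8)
The plan is to prove the cycle of implications $(1) \Rightarrow (2) \Rightarrow (3) \Rightarrow (1)$, since each individual link is then short. The one structural point to keep in mind throughout is that $Y$ is not assumed complete, so I will rely on the elementary fact that in a metric space compactness is equivalent to sequential compactness, rather than appealing to total boundedness and the equivalence recalled in the previous subsection (which requires completeness).

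First, $(1) \Rightarrow (2)$ is immediate: the open unit ball $B$ of $X$ is a bounded set, so $T(B)$ is relatively compact by Definition~\ref{compact}. For $(2) \Rightarrow (3)$, let $\{x_n\}$ be a bounded sequence, say $\norm{x_n} < r$ for all $n$ and some $r > 0$. Then $x_n \in rB$, hence $Tx_n \in T(rB) = rT(B)$, and since $r\,\overline{T(B)} = \overline{rT(B)}$ is compact (the dilation $y \mapsto ry$ being a homeomorphism of $Y$), the sequence $\{Tx_n\}$ lies in a compact subset of $Y$; sequential compactness then yields a subsequence $\{Tx_{n_k}\}$ converging in $Y$.

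The substantive step is $(3) \Rightarrow (1)$. Fix a bounded set $E \subseteq X$; it suffices to show that $\overline{T(E)}$ is sequentially compact. Given a sequence $\{y_n\} \subseteq \overline{T(E)}$, pick for each $n$ a point $z_n = Tx_n$ with $x_n \in E$ and $\norm{y_n - z_n} < 1/n$. The sequence $\{x_n\}$ is bounded, so by $(3)$ some subsequence $\{Tx_{n_k}\}$ converges to a limit $w \in Y$, and $w \in \overline{T(E)}$ since $\overline{T(E)}$ is closed and contains each $Tx_{n_k}$. Then $\norm{y_{n_k} - w} \le \norm{y_{n_k} - z_{n_k}} + \norm{z_{n_k} - w} \to 0$, so $y_{n_k} \to w$, which establishes sequential compactness and hence compactness of $\overline{T(E)}$, i.e. relative compactness of $T(E)$. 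I expect this final implication to be the only one requiring any care: without completeness of $Y$ one cannot pass from total boundedness of $T(E)$ to compactness of $\overline{T(E)}$, so the argument must genuinely extract a convergent subsequence, and the approximation $\norm{y_n - z_n} < 1/n$ is precisely what transfers the hypothesis $(3)$ from actual images $Tx_n$ to arbitrary points of the closure.
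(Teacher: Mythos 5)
Your proof is correct. The paper states Theorem~\ref{thm:compact-basic} without proof (treating it as a standard fact, with \cite{naylorsell} as the background reference), so there is no in-paper argument to compare against; your cyclic argument $(1)\Rightarrow(2)\Rightarrow(3)\Rightarrow(1)$ is the standard one, and you correctly handle the one genuine subtlety---that $Y$ is not assumed complete---by working with sequential compactness of $\overline{T(E)}$ directly rather than via total boundedness.
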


Let us denote by $B[X]$ the set of all bounded 
linear operators on a normed linear space space $X$:
\[
  B[X] = \{ T:X \to X | \text{ $T$ is a bounded linear transformation.} \}.
\]
Note that equipped by the operator norm $B[X]$ is a normed linear 
space. 
It is simple to show that compact operators
form a subspace of $B[X]$. The following  
result (cf.~\cite{naylorsell} for a proof) shows that the set of compact normal 
operators is in fact a closed subspace of $B[X]$.  
\begin{theorem} \label{thm:closed}
Let $\{ T_n \}$ be a sequence of compact  
operators on a normed linear space $X$. Suppose
$T_n \to T$ in $B[X]$. Then, $T$ is also a
compact operator. 
\end{theorem}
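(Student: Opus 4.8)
The plan is to verify the third characterization of compactness from Theorem~\ref{thm:compact-basic}: given an arbitrary bounded sequence $\{x_j\}$ in $X$, I must extract a subsequence $\{Tx_{j_k}\}$ that converges in $X$. After rescaling we may assume $\norm{x_j} \le 1$ for all $j$. The first step is a diagonal extraction. Since $T_1$ is compact, there is a subsequence $\{x_j^{(1)}\}$ of $\{x_j\}$ along which $\{T_1 x_j^{(1)}\}$ converges; since $T_2$ is compact, there is a further subsequence $\{x_j^{(2)}\}$ of $\{x_j^{(1)}\}$ along which $\{T_2 x_j^{(2)}\}$ converges; continuing inductively and setting $y_k = x_k^{(k)}$, the sequence $\{y_k\}$ is, from index $n$ onward, a subsequence of $\{x_j^{(n)}\}$, so that $\{T_n y_k\}_k$ converges for every fixed $n$.

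The second step is to show that $\{T y_k\}$ is Cauchy. Fix $\eps > 0$ and, using the hypothesis $T_n \to T$ in $B[X]$, choose $n$ with $\norm{T - T_n} < \eps/3$. For this $n$ the sequence $\{T_n y_k\}$ converges, hence is Cauchy, so there is $K$ with $\norm{T_n y_k - T_n y_l} < \eps/3$ whenever $k,l \ge K$. Then for $k,l \ge K$,
\[
\norm{T y_k - T y_l} \le \norm{(T - T_n) y_k} + \norm{T_n y_k - T_n y_l} + \norm{(T_n - T) y_l} < \tfrac{\eps}{3} + \tfrac{\eps}{3} + \tfrac{\eps}{3} = \eps,
\]
where we used $\norm{y_k}, \norm{y_l} \le 1$. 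Thus $\{T y_k\}$ is Cauchy in $X$; invoking completeness of $X$, it converges, and Theorem~\ref{thm:compact-basic} then yields that $T$ is compact.

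The main obstacle is simply bookkeeping in the diagonal argument: one must make sure that the tail of $\{y_k\}$ really is a subsequence of each intermediate sequence $\{x_j^{(n)}\}$, so that $\{T_n y_k\}_k$ inherits convergence for \emph{all} $n$ at once; once that is set up, the remainder is the routine $\eps/3$ estimate above. An alternative route, avoiding any appeal to sequences, is to show directly that $T$ maps the unit ball to a totally bounded set: for given $\eps$, pick $n$ with $\norm{T - T_n} < \eps/2$, cover $T_n(\,\text{unit ball})$ by finitely many balls of radius $\eps/2$ using precompactness (Theorem at the start of this section applied to the relatively compact set $T_n(\text{unit ball})$), and note that the concentric balls of radius $\eps$ then cover $T(\text{unit ball})$; precompactness of $T(\text{unit ball})$ together with completeness again gives relative compactness, hence compactness of $T$.
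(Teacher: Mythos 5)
The paper does not actually prove this theorem --- it defers to~\cite{naylorsell} --- so there is no in-text argument to compare against; your proof is the classical one and is essentially correct. The diagonal extraction is set up properly (the tail $\{y_k\}_{k\ge n}$ is indeed a subsequence of $\{x_j^{(n)}\}$ because the sub-subsequences are nested), and the $\eps/3$ estimate correctly uses the uniform bound $\norm{y_k}\le 1$ to control $\norm{(T-T_n)y_k}$ by the operator norm $\norm{T-T_n}$; this is exactly where norm convergence, as opposed to strong convergence, of $T_n$ is needed.

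The one point to flag is your appeal to \emph{completeness of $X$} in the final step (``$\{Ty_k\}$ is Cauchy, hence converges''). The theorem as stated assumes only that $X$ is a normed linear space, and completeness is not among the hypotheses. This is not a defect you can argue around: your Cauchy argument only shows that $T$ maps bounded sequences to sequences with Cauchy subsequences (equivalently, your second route shows $T(\text{unit ball})$ is totally bounded), and the paper's own Theorem at the start of Section~\ref{sec:compact} is explicit that precompactness upgrades to relative compactness only when the ambient space is complete. So what you have proved is the standard textbook statement, which requires the target space to be a Banach space; the statement in the paper is slightly imprecise on this point. Since the theorem is only invoked here to establish compactness of Hilbert--Schmidt operators on $X = L^2(D)$, which is complete, the omission is harmless in context, but you are right to make the use of completeness explicit, and it would be worth remarking that the hypothesis is genuinely needed rather than a convenience of the method.
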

Another interesting fact regarding compact linear operators is that they form
an ideal of the ring of bounded linear mappings $B[X]$. This follows from the
following basic result whose simple proof is also included 
for reader's convenience.
\begin{lemma}
Let $X$ be a normed linear space, and let $T$ and $S$ be in 
$B[X]$. If $T$ is compact, then so are $ST$ and $TS$. 
\end{lemma}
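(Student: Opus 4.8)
The plan is to use the sequential characterisation of compactness from Theorem~\ref{thm:compact-basic}, namely that a linear operator is compact if and only if it maps bounded sequences to sequences possessing a norm-convergent subsequence. Both claims then reduce to combining this property of $T$ with the elementary fact that a bounded linear operator is (Lipschitz) continuous, hence preserves convergence of sequences and maps bounded sets to bounded sets.

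First I would treat $ST$. Let $\{x_n\}$ be an arbitrary bounded sequence in $X$. Since $T$ is compact, there is a subsequence $\{x_{n_k}\}$ such that $\{Tx_{n_k}\}$ converges to some $y \in X$. Because $S \in B[X]$ is continuous, $\{STx_{n_k}\}$ converges to $Sy$. Thus every bounded sequence has an image under $ST$ with a convergent subsequence, so $ST$ is compact by Theorem~\ref{thm:compact-basic}.

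Next I would treat $TS$. Again let $\{x_n\}$ be a bounded sequence in $X$. Since $S$ is bounded, $\{Sx_n\}$ is a bounded sequence in $X$ (with $\norm{Sx_n} \le \norm{S}\,\norm{x_n}$). Applying compactness of $T$ to this sequence yields a subsequence $\{Sx_{n_k}\}$ for which $\{T(Sx_{n_k})\}$ converges in $X$. Hence $TS$ maps the bounded sequence $\{x_n\}$ to a sequence with a convergent subsequence, and $TS$ is compact, again by Theorem~\ref{thm:compact-basic}.

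I do not expect any real obstacle here: the argument is a direct composition of the sequential compactness criterion with boundedness/continuity, and the only point requiring any care is bookkeeping about which operator acts first and the use of $\norm{Sx_n}\le \norm{S}\,\norm{x_n}$ to preserve boundedness in the $TS$ case. An equivalent one-line phrasing via item~2 of Theorem~\ref{thm:compact-basic} — noting that $S$ maps the unit ball into a bounded set and that continuous images of relatively compact sets are relatively compact — could be substituted if a more topological formulation is preferred.
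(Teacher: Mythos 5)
Your proposal is correct and follows exactly the same route as the paper: the sequential criterion of Theorem~\ref{thm:compact-basic}(3) combined with continuity of $S$ for $ST$, and boundedness of $\{Sx_n\}$ followed by compactness of $T$ for $TS$. The only (harmless) addition is that you spell out the estimate $\norm{Sx_n}\le \norm{S}\,\norm{x_n}$, which the paper leaves implicit.
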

\begin{proof}
Consider the mapping $ST$. Let $\{ x_n \}$ be a bounded sequence
in $X$. Then, by Theorem~\ref{thm:compact-basic}(3), there exists 
a subsequence $\{Tx_{n_k}\}$ of $\{Tx_n\}$ that converges
in $X$: $Tx_{n_k} \to y^* \in X$.
Now, since $S$ is continuous, it follows that 
$STx_{n_k} \to S(y^*)$; that is, 
$\{STx_{n_k}\}$ converges in $X$ also, and so $ST$ is compact.
To show $TS$ is compact, take a bounded sequence $\{ x_n \}$
in $X$ and note that $\{Sx_n\}$ is bounded also (since $S$ is
continuous). Thus, again by Theorem~\ref{thm:compact-basic}(3), 
there exists a subsequence $\{TSx_{n_k}\}$ which converges in $X$,
and thus, $TS$ is also compact.\qedhere 
\end{proof}

\begin{remark} \label{rmk:compact-inv}
A compact linear operator of an infinite
dimensional normed linear space is not invertible
in $B[X]$. To see this, suppose that $T$ has an inverse $S$ in 
$B[X]$. Now, applying the previous
Lemma, we get that $I = TS = ST$ is also compact. However, 
this implies that the closed unit ball in $X$ is compact, 
which is not possible since we assumed $X$ is infinite
dimensional.
(Recall that the closed unit ball in a normed linear space
$X$ is compact if and only if $X$ is finite dimensional.) 
\end{remark}

\subsection{Hilbert-Schmidt operators}
Let $D \subset \R^n$ be a bounded domain. 
We call a function  
$k:D \times D \to \R$ a Hilbert-Schmidt
kernel if 
\[
   \int_D \int_D |k(x,y)|^2 \, dx\,dy < \infty,
\]
that is, $k \in L^2(D \times D)$  
(note that one special case is when $k$ is a continuous
function on $D \times D$). 
Define the 
integral operator $K$ on $L^2(D)$, $K: u \to K u$ 
for $u \in L^2(D)$, by 
\begin{equation}\label{equ:hilb}
   [K u](x) = \int_D k(x,y)u(y) \, dy.  
\end{equation}
It is simple to show that $K$ is a bounded operator 
on $L^2(D)$. Linearity is clear. As for boundedness, we 
note that for every $u \in L^2(D)$, 
\begin{align*}
\|Ku\|_{L^2(D)}^2 = \int_D \Big| (K u)(x) \Big|^2 \, dx
&=
\int_D \Big| \int_D k(x,y) u(y) \, dy \Big|^2 \, dx
\\
&\leq \int_D \Big(\int_D |k(x,y)|^2\, dy\Big) 
      \Big(\int_D |u(y)|^2 \, dy\Big) \, dx
\qquad \text{(Cauchy-Schwarz)}
\\
&= \Norm{k}{L^2(D\times D)}\Norm{u}{L^2(D)} < \infty.
\end{align*} 

An integral operator $K$ as defined above is
called a \emph{Hilbert-Schmidt operator}. 
The following result which is usually proved using
Theorem~\ref{thm:closed} is very useful. 
\begin{lemma} \label{lem:compactop}
Let $D$ be a bounded domain in $R^n$ and let $k \in L^2(D \times D)$
be a Hilbert-Schmidt kernel. Then, the integral operator
$K : L^2(D) \to L^2(D)$ given by 
$[K u](x) = \int_D k(x, y) u(y) \, dy$ is a compact operator. 
\end{lemma}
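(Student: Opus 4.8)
The plan is to exhibit $K$ as an operator-norm limit of finite-rank (hence compact) integral operators and then invoke Theorem~\ref{thm:closed}. The starting point is the boundedness estimate derived just above: if $k_1, k_2 \in L^2(D\times D)$ and $K_1, K_2$ denote the associated integral operators, then $K_1 - K_2$ is the integral operator with kernel $k_1 - k_2$, so $\norm{K_1 - K_2} \le \Norm{k_1-k_2}{L^2(D\times D)}$. Thus $k \mapsto K$ is a Lipschitz-continuous map from $L^2(D\times D)$ into $B[L^2(D)]$, and it suffices to approximate $k$ in $L^2(D\times D)$ by kernels whose operators are known to be compact.

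Next I would produce a convenient dense class of ``nice'' kernels. Since $D$ is a bounded domain in $\R^n$, the space $L^2(D)$ is separable; fix an orthonormal basis $\{e_i\}_{i=1}^\infty$. Then $\{e_i(x)e_j(y)\}_{i,j}$ is an orthonormal basis of $L^2(D\times D)$, so the truncations $k_N(x,y) = \sum_{i,j\le N} c_{ij}\, e_i(x)e_j(y)$ with $c_{ij} = \ip{k}{e_i\otimes e_j}$ converge to $k$ in $L^2(D\times D)$. (Alternatively one could approximate $k$ by finite linear combinations of indicator functions of rectangles $A\times B\subseteq D\times D$; either route produces degenerate kernels, i.e.\ finite sums of products $a(x)b(y)$.)

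The point of this reduction is that the integral operator $K_N$ associated with $k_N$ satisfies $[K_N u](x) = \sum_{i,j\le N} c_{ij}\ip{u}{e_j} e_i(x)$, so its range lies in $\mathrm{span}\{e_1,\dots,e_N\}$, a finite-dimensional subspace of $L^2(D)$. A bounded operator with finite-dimensional range is compact: it maps a bounded sequence into a bounded subset of a finite-dimensional space, which is relatively compact, so Theorem~\ref{thm:compact-basic}(3) applies. Hence each $K_N$ is compact; and since $\norm{K - K_N}\le \Norm{k-k_N}{L^2(D\times D)}\to 0$, Theorem~\ref{thm:closed} yields that $K$ is compact.

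The hard part is the density step, namely verifying that finite sums of product functions $a(x)b(y)$ are dense in $L^2(D\times D)$ --- equivalently, that $\{e_i\otimes e_j\}$ is a complete orthonormal system in $L^2(D\times D)$ whenever $\{e_i\}$ is one in $L^2(D)$. This is standard but not entirely trivial: it rests on separability of $L^2$ over a bounded Euclidean domain together with Fubini's theorem to compute the coefficients $c_{ij}$. Everything else --- the boundedness estimate, ``finite rank $\Rightarrow$ compact,'' and closure of the compact operators under norm limits (Theorem~\ref{thm:closed}) --- is immediate from results already recorded in this section.
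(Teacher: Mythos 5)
Your proof is correct and is exactly the argument the paper alludes to: the paper does not spell out a proof of Lemma~\ref{lem:compactop} but notes it ``is usually proved using Theorem~\ref{thm:closed},'' and your route --- degenerate-kernel truncations $k_N$ in the tensor basis $\{e_i\otimes e_j\}$, finite-rank (hence compact) operators $K_N$, the estimate $\norm{K-K_N}\le \Norm{k-k_N}{L^2(D\times D)}$, and closure of the compacts under norm limits --- is the standard way to do that. All the supporting steps you flag (completeness of $\{e_i\otimes e_j\}$, finite rank implies compact) are routine and correctly justified.
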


\subsection{Spectral theorem for compact self-adjoint operators}
Let $H$ be a real Hilbert space with inner product 
$\ip{\cdot}{\cdot}:H \times H \to \R$. 
A linear operator $T:H \to H$ is called 
self adjoint if 
\[
  \ip{Tx}{y} = \ip{x}{Ty}, \quad \forall x, y \in H.
\]
\begin{example}
Let us consider a Hilbert-Schmidt
operator $K$ on $L^2([a,b])$ as in~\eqref{equ:hilb}
(where for simplicity we have taken $D = [a, b] \subset \R$).   
Then, 
it is simple to show that $K$ is self-adjoint
if and only if $k(x,y) = k(y,x)$ on $[a,b] \times [a,b]$.
\end{example}
A linear operator $T:H \to H$, is called positive 
if $\ip{Tx}{x} \ge 0$ for all $x$ in $H$. Recall that a 
scalar $\lambda \in \R$ is called an eigenvalue of $T$ if there exists
a non-zero $x \in H$ such that $Tx = \lambda x$. Note that 
the eigenvalues of a positive operator are necessarily 
non-negative.

Compact self-adjoint operators on infinite dimensioal 
Hilbert spaces resemble many properties of the symmetric matrices. 
Of particular interest is the spectral decomposition 
of a compact self-adjoint operator as given by the 
following:
\begin{theorem}
Let $H$ be a (real or complex) Hilbert space and
let $T:H \to H$ be a compact self-adjoint operator. Then, 
$H$ has an orthonormal basis $\{e_i\}$ of eigenvectors of $T$
corresponding to eigenvalues $\lambda_i$. In addition, the following
holds:
\begin{enumerate}
\item The eigenvalues $\lambda_i$ are real having zero as the 
only possible point of accumulation.  
\item The eigenspaces corresponding to distinct eigenvalues
are mutually orthogonal.
\item The eigenspaces corresponding to non-zero eigenvalues are finite-dimensional.
\end{enumerate}
\end{theorem}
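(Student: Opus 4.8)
The plan is to produce an orthonormal basis of $H$ consisting of eigenvectors of $T$ by a maximality (Zorn's lemma) argument, whose engine is the classical \emph{existence lemma}: a nonzero compact self-adjoint operator always has an eigenvalue $\lambda$ with $|\lambda| = \|T\|$. Once such a basis is in hand, the structural statements (1)--(3) follow from short arguments combining compactness with self-adjointness.

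First I would record the elementary facts. If $Tx = \lambda x$ with $x \ne 0$, then $\lambda\|x\|^2 = \ip{Tx}{x} = \ip{x}{Tx} = \bar{\lambda}\|x\|^2$, so $\lambda$ is real. If moreover $Ty = \mu y$ with $\mu \ne \lambda$, then $(\lambda - \mu)\ip{x}{y} = \ip{Tx}{y} - \ip{x}{Ty} = 0$, hence $\ip{x}{y} = 0$; this is (2). For (3), were the eigenspace $E_\lambda$ of some $\lambda \ne 0$ infinite-dimensional it would contain an infinite orthonormal sequence $\{f_n\}$, and then $\|Tf_n - Tf_m\|^2 = |\lambda|^2\|f_n - f_m\|^2 = 2|\lambda|^2$ for $n \ne m$, so $\{Tf_n\}$ would have no convergent subsequence, contradicting compactness via Theorem~\ref{thm:compact-basic}(3). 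Applying the same estimate to unit eigenvectors $e_n$ of a sequence of distinct eigenvalues $\lambda_n \to \lambda \ne 0$ (these $e_n$ being orthogonal by (2)) gives $\|Te_n - Te_m\|^2 = \lambda_n^2 + \lambda_m^2 \to 2\lambda^2 > 0$, again impossible; hence $0$ is the only possible accumulation point of the eigenvalues, which is (1).

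The heart of the matter is the existence lemma. I would first establish the identity $\|T\| = \sup_{\|x\| = 1}|\ip{Tx}{x}|$ for self-adjoint $T$: the bound $\ge$ is Cauchy--Schwarz, while $\le$ comes from expanding $\ip{T(x+y)}{x+y} - \ip{T(x-y)}{x-y} = 4\,\mathrm{Re}\,\ip{Tx}{y}$ and using the parallelogram law. Setting $\alpha := \|T\| = \sup_{\|x\|=1}|\ip{Tx}{x}|$, choose unit vectors $x_n$ with $\ip{Tx_n}{x_n} \to \lambda$, where $\lambda = \alpha$ or $\lambda = -\alpha$ is attained in the limit. Then
\[
  \|Tx_n - \lambda x_n\|^2 = \|Tx_n\|^2 - 2\lambda\ip{Tx_n}{x_n} + \lambda^2 \le 2\|T\|^2 - 2\lambda\ip{Tx_n}{x_n} \to 2\|T\|^2 - 2\lambda^2 = 0 .
\]
Compactness (Theorem~\ref{thm:compact-basic}(3)) supplies a subsequence with $Tx_{n_k} \to y$; then $\lambda x_{n_k} \to y$ as well, so if $\lambda \ne 0$ we conclude $x_{n_k} \to e := y/\lambda$ with $\|e\| = 1$ and $Te = \lambda e$ by continuity of $T$. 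If $\|T\| = 0$ then $T = 0$ and there is nothing to prove.

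To assemble the basis, let $\mathcal{S}$ be the collection of all orthonormal subsets of $H$ whose members are eigenvectors of $T$, ordered by inclusion; since the union of a chain in $\mathcal{S}$ again lies in $\mathcal{S}$, Zorn's lemma yields a maximal element $\{e_i\}$. Put $M := \overline{\mathrm{span}}\{e_i\}$. Self-adjointness gives $T(M^\perp) \subseteq M^\perp$, since for $x \in M^\perp$ we have $\ip{Tx}{e_i} = \ip{x}{Te_i} = \lambda_i\ip{x}{e_i} = 0$ for every $i$. Hence $T|_{M^\perp}$ is a compact self-adjoint operator on the Hilbert space $M^\perp$, and if $M^\perp \ne \{0\}$ then either $T|_{M^\perp} \ne 0$ and the existence lemma furnishes a unit eigenvector of $T$ inside $M^\perp$, or $T|_{M^\perp} = 0$ and any unit vector of $M^\perp$ is an eigenvector with eigenvalue $0$; in either case $\{e_i\}$ admits a proper extension in $\mathcal{S}$, contradicting maximality. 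Therefore $M^\perp = \{0\}$, so $\{e_i\}$ is an orthonormal basis of $H$ consisting of eigenvectors of $T$. I expect the existence lemma --- specifically the norm identity and the limiting extraction of the eigenvector --- to be the only genuinely delicate step; the elementary facts and the Zorn assembly are routine.
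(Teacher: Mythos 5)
Your proof is correct, and it is worth noting at the outset that the paper itself does not prove this theorem: it is quoted as a classical fact (with \cite{naylorsell} as the implicit reference), so there is no in-paper argument to compare against. What you supply is the standard textbook proof, and all the essential steps are present and sound: the variational identity $\|T\| = \sup_{\|x\|=1}|\ip{Tx}{x}|$ for self-adjoint operators, the extraction of an eigenvector at norm $\pm\|T\|$ via the estimate $\|Tx_n - \lambda x_n\|^2 \le 2\|T\|^2 - 2\lambda\ip{Tx_n}{x_n} \to 0$ together with compactness (note that this cross term is legitimate precisely because $\ip{Tx_n}{x_n}$ is real for self-adjoint $T$, which you implicitly use), the Zorn maximality argument with the invariance $T(M^\perp)\subseteq M^\perp$, and the orthogonality trick $\|Te_n - Te_m\|^2 = \lambda_n^2 + \lambda_m^2$ that simultaneously handles finite multiplicity of nonzero eigenvalues and the absence of nonzero accumulation points. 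Two very minor points you could make explicit: the family $\mathcal{S}$ is nonempty (it contains the empty set, or a single unit eigenvector produced by the existence lemma when $T \ne 0$), so Zorn applies; and in the case $T|_{M^\perp} = 0$ with $M^\perp \ne \{0\}$ you correctly absorb $M^\perp$ into the eigenspace of $0$, which is exactly why the resulting basis may be uncountable while the nonzero eigenvalues remain countable. Neither affects the validity of the argument.
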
 
In the case of a positive compact self-adjoint operator, 
we know that the eigenvalues are non-negative. Hence, we may
order the eigenvalues as follows
\[
   \lambda_1 \ge \lambda_2 \ge ... \ge 0.
\]
\begin{remark}
Recall that for a linear operator $A$ on a finite dimensional linear 
space, we define 
its spectrum $\sigma(A)$ as the set of its eigenvalues. On the other hand, 
for a linear operator $T$ on an infinite dimensional (real) normed linear space
the spectrum $\sigma(T)$ of $T$ is defined by,
\[
   \sigma(T) = \{ \lambda \in \R : T - \lambda I \text{ is not invertible in } B[X]\},
\] 
and $\sigma(T)$ is the disjoint union of the point spectrum (set of eigenvalues), 
contiuous spectrum, and residual spectrum (see~\cite{naylorsell} for details).
As we saw in Remark~\ref{rmk:compact-inv}, a compact operator $T$ on an infinite dimensional
space $X$ cannot be invertible in $B[X]$; therefore, we always have $0 \in \sigma(T)$.
However, not much can be said on whether $\lambda = 0$ is in point spectrum 
(i.e. an eigenvalue) or the other parts of the spectrum. 
\end{remark}
\section{Mercer's Theorem} \label{sec:mercer}
Let $D = [a, b] \subset \R$. We have seen that given 
a continuous kernel $k:D\times D \to \R$, we can define 
a Hilbert-Schmidt operator through~\eqref{equ:hilb} which 
is compact and has a complete set of eigenvectors in $L^2(D)$. 
The following result by Mercer provides a series representation
for the kernel $k$ based on spectral representation of the 
corresponding Hilbert-Schmidt operator $K$. 
A proof of this result can be found for example in~\cite{Gohberg}.
\begin{theorem}[Mercer] \label{thm:mercer}
Let $k:D \times D \to \R$ be a continuous 
function, where $D = [a,b] \subset \R$.
Suppose further that the corresponding Hilbert-Schmidt operator
$K:L^2(D) \to L^2(D)$ given by~\eqref{equ:hilb} 
is postive. If $\{ \lambda_i \}$ and $\{ e_i \}$ 
are the eigenvalues and eigenvectors of $K$,
then for all $s, t \in D$,
\begin{equation} \label{equ:kernel-dec}
k(s, t) = \sum_i \lambda_i e_i(s) e_i(t),
\end{equation}
where convergence is absolute and uniform on $D\times D$. 
\end{theorem}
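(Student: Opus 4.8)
The plan is to build up the representation in three stages: first establish pointwise convergence of the series to $k$ in an $L^2$ sense, then upgrade to absolute pointwise convergence using positivity, and finally invoke Dini's theorem to get uniform convergence. Throughout I will use that $K$ is compact (Lemma~\ref{lem:compactop}) and self-adjoint (since $k$ is continuous and, as in the Example, symmetry of $k$ is forced by self-adjointness — here we take it as given via positivity), so the Spectral Theorem furnishes an orthonormal basis $\{e_i\}$ of $L^2(D)$ with $Ke_i = \lambda_i e_i$, $\lambda_i \ge 0$. A standard elliptic-type bootstrap argument (or direct estimate using continuity of $k$) shows each $e_i$ corresponding to $\lambda_i > 0$ is continuous, since $e_i = \lambda_i^{-1} K e_i$ and $K$ maps $L^2(D)$ into $C(D)$ when $k$ is continuous.

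First I would fix $s \in D$ and consider the function $k_s(\cdot) = k(s,\cdot) \in L^2(D)$. Expanding in the basis $\{e_i\}$ gives $\langle k_s, e_i\rangle = \int_D k(s,t) e_i(t)\,dt = [Ke_i](s) = \lambda_i e_i(s)$, so the $L^2$-expansion of $k_s$ is exactly $\sum_i \lambda_i e_i(s) e_i(\cdot)$. This already yields convergence in $L^2(D\times D)$ of the series to $k$, but that is weaker than claimed. The key quantitative step is to show the partial-sum remainders $r_n(s,t) = k(s,t) - \sum_{i\le n}\lambda_i e_i(s)e_i(t)$ define positive kernels: the operator $K - \sum_{i\le n}\lambda_i \langle \cdot, e_i\rangle e_i$ is positive (its eigenvalues are the $\lambda_i$ with $i > n$, all $\ge 0$), and a positive continuous kernel has nonnegative diagonal, so $r_n(t,t) \ge 0$, i.e. $\sum_{i\le n}\lambda_i e_i(t)^2 \le k(t,t)$. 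Letting $n\to\infty$ shows $\sum_i \lambda_i e_i(t)^2$ converges pointwise, bounded by $\max_{t\in D} k(t,t) =: M$.

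Next I would leverage this to get absolute convergence off the diagonal: by Cauchy–Schwarz on the tail, $\sum_{i=m}^{n} \lambda_i |e_i(s)||e_i(t)| \le \big(\sum_{i=m}^n \lambda_i e_i(s)^2\big)^{1/2}\big(\sum_{i=m}^n \lambda_i e_i(t)^2\big)^{1/2}$, and since $\sum_i \lambda_i e_i(s)^2 \le M$ for every $s$, the tail in the first factor tends to $0$ uniformly in $s$ while the second is bounded by $M^{1/2}$. Hence $\sum_i \lambda_i e_i(s)e_i(t)$ converges absolutely and uniformly in $t$ for each fixed $s$, and its sum is a continuous function of $t$ agreeing with $k_s$ in $L^2$, hence equal to $k(s,\cdot)$ pointwise by continuity. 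This gives \eqref{equ:kernel-dec} pointwise everywhere.

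The final step — and the one I expect to be the main obstacle — is promoting pointwise convergence on $D\times D$ to \emph{uniform} convergence. Here the natural tool is Dini's theorem: the partial sums $s_n(t) = \sum_{i\le n}\lambda_i e_i(t)^2$ are continuous, increase monotonically in $n$, and converge pointwise to the continuous function $k(t,t)$ (continuity of the limit must itself be argued, e.g. via the uniform-in-$t$ tail bound just obtained applied on the diagonal), so the convergence of $s_n$ to $k(t,t)$ is uniform on the compact set $D$. Feeding this uniform control of the diagonal back into the Cauchy–Schwarz estimate above — now $\sum_{i=m}^n \lambda_i e_i(s)^2 \le \varepsilon$ for all $s$ once $m$ is large, uniformly — yields that the remainder $\sum_{i>n}\lambda_i e_i(s)e_i(t)$ is bounded by $\varepsilon^{1/2} M^{1/2}$ uniformly on $D\times D$, which is precisely absolute and uniform convergence. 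The delicate point throughout is the interplay between positivity (which gives the crucial one-sided diagonal bound) and compactness/continuity (which gives continuity of the $e_i$ and of the limit), and care is needed to verify the continuity of $t\mapsto k(t,t) = \sum_i\lambda_i e_i(t)^2$ before Dini applies.
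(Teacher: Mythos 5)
The paper does not actually prove this theorem: it is stated as a quoted result and the proof is deferred to the literature (\cite{Gohberg}), so there is no internal proof to compare yours against. Your proposal is the standard textbook proof of Mercer's theorem, and its architecture is sound: positivity of the truncated remainder operators yields the one-sided diagonal bound $\sum_{i\le n}\lambda_i e_i(t)^2 \le k(t,t)$; Cauchy--Schwarz converts this into absolute convergence and, for each fixed $s$, uniform convergence in $t$, which identifies the sum with $k(s,\cdot)$ via the $L^2$ expansion and continuity; and Dini's theorem applied on the diagonal then upgrades everything to uniform convergence on $D\times D$. Three points deserve tightening. First, the assertion that a positive continuous kernel has nonnegative diagonal is the one genuinely nontrivial lemma in your chain and you state it without proof; it requires the argument that if $r_n(t_0,t_0)<0$ then by continuity $r_n<0$ on some square $B\times B$ about $(t_0,t_0)$, and testing the remainder operator against the indicator of $B$ contradicts its positivity. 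Second, in the middle paragraph you claim the tail $\sum_{i\ge m}\lambda_i e_i(s)^2$ tends to zero \emph{uniformly} in $s$ merely from the uniform bound by $M$; that does not follow from boundedness alone (it is exactly what Dini supplies later), but the conclusion you draw at that stage only needs the tail to vanish at the fixed $s$, so the argument survives with the wording corrected. Third, in a real Hilbert space positivity of $K$ does not by itself imply self-adjointness (hence symmetry of $k$), so the symmetry hypothesis should be assumed explicitly rather than extracted ``via positivity''; the paper's application to the autocorrelation kernel $R_X$ satisfies it in any case. With these repairs your proof is complete and supplies exactly what the paper takes on faith.
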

\section{Stochastic processes} \label{sec:stoc-proc}
In what follows we consider a probability space $(\Omega, \F, P)$,
where $\Omega$ is a sample space, $\F$ is an appropriate $\sigma$-algebra
on $\Omega$ and $P$ is a probability measure. A real valued
random
variable $X$ on $(\Omega, \F, P)$ is an $\F/\B(\R)$-measurable
mapping $X: (\Omega, \F, P) \to (\R, \B(\R))$.
The expectation and variance of a random variable $X$ is denoted by,
\[
   \ave{X} := \int_\Omega X(\omega) \, dP(\omega), 
   \quad \var{X} := \ave{ (X - \ave{X})^2}.
\]
$L^2(\Omega, \F, P)$
denotes the Hilbert space of (equivalence classes) of real valued
square integrable random variables on $\Omega$:
\[
   L^2(\Omega, \F, P) =
   \{ X : \Omega \to \R : \int_\Omega |X(\omega)|^2 \, dP(\omega) < \infty\}.
\]
with inner product, $\ip{X}{Y} = \ave{XY} = \int_\Omega X Y \, dP$
and norm $\norm{X} = \ip{X}{X}^{1/2}$.

Let $D \subseteq \R$,
a stochastic prcess is a mapping $X:D \times \Omega \to \R$, 
such that $X(t, \cdot)$ is measurable for every $t \in D$;
alternatively, we may define a stochastic process as 
a family of random variables, $X_t:\Omega \to \R$ with $t\in D$, 
and refer to $X$ as $\{X_t\}_{x \in D}$. 
Both of these points of view of a stochastic process are useful 
and hence we will be switching between them as appropriate.

A stochastic process is called \emph{centered} if $\ave{X_t} = 0$ for
all $t \in D$. 
Let $\{Y_t\}_{t \in D}$ be an arbitrary stochastic process. We note that
\[
   Y_t = \ave{Y_t} + X_t,
\]
where $X_t = Y_t - \ave{Y_t}$ and $\{X_t\}_{t \in D}$ is a centered stochastic 
process. Therefore, without loss of generality, we will focus our attention
to centered stochastic processes. 

We say a stochastic process is \emph{mean-square continuous}
if 
\[
   \displaystyle \lim_{\eps \to 0} \ave{(X_{t+\eps} - X_t)^2} = 0. 
\]
The following definition is also useful.
\begin{definition}[Realization of a stochastic process]
Let $X:D \times \Omega \to \R$ be a stochastic process. 
For a fixed $\omega \in \Omega$, 
we define $\hat{X} : D \to \R$ by $\hat{X}(t) = X_t(\omega)$. 
We call $\hat{X}$ a realization of the stochastic process. 
\end{definition}

For more details on theory of stochastic processes please
consult~\cite{Williams,RogersWilliamsV1,RogersWilliamsV2}. 

\subsection{Autocorrelation function of a stochastic process}
The autocorrelation function of a stochastic process $\{X_t\}_{t \in D}$ is given by 
$R_X:D \times D \to \R$ defined through
\[
R_X(s, t) = \ave{X_s X_t}, \quad s, t \in D.
\]
The following well-known result states that for a stochastic process  the
continuity of its autocorrelation function is a necessary and sufficient
condition for the mean-square continuity of the process.

\begin{lemma}\label{lem:cont}
A stochastic process $\{X_t\}_{t \in [a,b]}$ is mean-square continuous if and
only if its autocorrelation function $R_X$ is continuous
on $[a,b]\times[a,b]$. 
\end{lemma}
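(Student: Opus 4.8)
The plan is to prove both implications directly from the definitions, working with the quantity $\ave{(X_{t+\eps}-X_t)^2}$ and expanding it in terms of $R_X$. The key algebraic identity, valid for any $s,s',t,t' \in [a,b]$, is
\[
\ave{(X_{s'} - X_s)(X_{t'} - X_t)} = R_X(s',t') - R_X(s',t) - R_X(s,t') + R_X(s,t),
\]
which follows by bilinearity of the inner product $\ip{\cdot}{\cdot} = \ave{\cdot\,\cdot}$ on $L^2(\Omega,\F,P)$. Both directions of the lemma will be read off from this identity together with the Cauchy--Schwarz inequality in $L^2(\Omega,\F,P)$.

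For the ``only if'' direction, suppose $\{X_t\}$ is mean-square continuous. I would first observe that mean-square continuity implies $\sup_{t\in[a,b]}\norm{X_t} < \infty$: the map $t \mapsto X_t$ is continuous from $[a,b]$ into $L^2(\Omega,\F,P)$, hence its image is compact and therefore norm-bounded, say by $M$. Now fix $(s,t)$ and take $(s',t') \to (s,t)$. Write
\[
R_X(s',t') - R_X(s,t) = \ip{X_{s'} - X_s}{X_{t'}} + \ip{X_s}{X_{t'} - X_t},
\]
and bound each term by Cauchy--Schwarz: the first is at most $\norm{X_{s'} - X_s}\,\norm{X_{t'}} \le M\,\norm{X_{s'}-X_s}$ and the second is at most $\norm{X_s}\,\norm{X_{t'} - X_t} \le M\,\norm{X_{t'}-X_t}$. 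Mean-square continuity gives $\norm{X_{s'} - X_s} \to 0$ and $\norm{X_{t'} - X_t} \to 0$, so $R_X(s',t') \to R_X(s,t)$, i.e.\ $R_X$ is continuous on $[a,b]\times[a,b]$.

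For the ``if'' direction, suppose $R_X$ is continuous on the compact set $[a,b]\times[a,b]$, hence uniformly continuous there. Applying the identity above with $s' = t' = t+\eps$ and $s = t$ gives
\[
\ave{(X_{t+\eps} - X_t)^2} = R_X(t+\eps,t+\eps) - 2R_X(t+\eps,t) + R_X(t,t).
\]
As $\eps \to 0$, each of the three points $(t+\eps,t+\eps)$, $(t+\eps,t)$, $(t,t)$ converges to $(t,t)$, so by continuity of $R_X$ the right-hand side tends to $R_X(t,t) - 2R_X(t,t) + R_X(t,t) = 0$; uniform continuity makes this limit uniform in $t$ if desired, though pointwise suffices for the stated definition. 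This establishes mean-square continuity.

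The only genuine subtlety is the boundedness step $\sup_t \norm{X_t} < \infty$ in the forward direction: without it the Cauchy--Schwarz bounds do not close. I expect that to be the ``hard part,'' and the cleanest route is the compactness argument sketched above (continuous image of $[a,b]$ in $L^2$), though one can alternatively derive local boundedness directly from the $\eps$-$\delta$ form of mean-square continuity and patch together finitely many local bounds using compactness of $[a,b]$. Everything else is bilinear expansion plus Cauchy--Schwarz.
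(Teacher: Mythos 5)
Your proof is correct, and your ``if'' direction is identical to the paper's: expand $\ave{(X_{t+\eps}-X_t)^2}$ into $R_X(t+\eps,t+\eps)-2R_X(t+\eps,t)+R_X(t,t)$ and pass to the limit using continuity of $R_X$. In the ``only if'' direction you and the paper both use bilinearity plus Cauchy--Schwarz, but with different decompositions of the increment $R_X(s',t')-R_X(s,t)$. You telescope in two terms, $\ip{X_{s'}-X_s}{X_{t'}}+\ip{X_s}{X_{t'}-X_t}$; since the factor $X_{t'}$ in the first term varies as you take the limit, you must supply the bound $\sup_t\norm{X_t}\le M$, which you correctly obtain from compactness of the continuous image of $[a,b]$ in $L^2(\Omega)$ (a local bound $\norm{X_{t'}}\le\norm{X_{t'}-X_t}+\norm{X_t}$ would also suffice). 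The paper instead writes $X_{s'}X_{t'}-X_sX_t=(X_{s'}-X_s)(X_{t'}-X_t)+(X_{s'}-X_s)X_t+(X_{t'}-X_t)X_s$, so that every factor which does not tend to zero is one of the \emph{fixed} random variables $X_s$, $X_t$; the quantities $\ave{X_s^2}^{1/2}$ and $\ave{X_t^2}^{1/2}$ are then just constants and no boundedness step is needed at all. So the step you flag as the one ``genuine subtlety'' is real in your formulation but is engineered away by the paper's choice of decomposition; your version costs one extra (easy) observation and in exchange records the useful fact that $t\mapsto X_t$ is a bounded curve in $L^2(\Omega)$.
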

\begin{proof}
Suppose $R_X$ is continuous, and note that
\[
   \ave{(X_{t+\eps} - X_t)^2} = \ave{X_{t+\eps}^2} - 
                                2\ave{X_{t+\eps}X_t} + 
                                \ave{X_t^2} 
                              = R_X(t+\eps, t+\eps)
                                - 2 R_X(t+\eps,t) + R_X(t,t).
\]
Therefore, since $R_X$ is continuous,
\[
   \lim_{\eps \to 0} 
   \ave{(X_{t+\eps} - X_t)^2} = 
   \lim_{\eps \to 0} R_X(t+\eps, t+\eps) - 2 R_X(t+\eps,t) + R_X(t,t)
   = 0. 
\]
That is $X_t$ is mean-square continuous. 
Conversely, if $X_t$ is mean-square continous we proceed as follows: 
\begin{align*}
&|R_X(t+\eps, s+\nu) - R_X(t, s)| = 
               |\ave{X_{t+\eps} X_{s+\nu}} - \ave{X_t X_s}|\\
               &= 
               \Big|\ave{(X_{t+\eps}-X_t)(X_{s+\nu}-X_s)}  
               +\ave{(X_{t+\eps} - X_t)X_s} + \ave{(X_{s+\nu} - X_s)X_t}\Big|
               \\
               &\leq \Big|\ave{(X_{t+\eps}-X_t)(X_{s+\nu}-X_s)}\Big|  
               +\Big|\ave{(X_{t+\eps} - X_t)X_s}\Big| + 
                \Big|\ave{(X_{s+\nu} - X_s)X_t}\Big|
               \\
               &\leq 
               \ave{(X_{t+\eps}-X_t)^2}^{1/2}\ave{(X_{s+\nu}-X_s)^2}^{1/2}
               +\ave{(X_{t+\eps} - X_t)}^{1/2}\ave{X_s^2}^{1/2}\\ 
               &\hspace{6.75cm}+ \ave{(X_{s+\nu} - X_s)^2}^{1/2}\ave{X_t^2}^{1/2}, 
\end{align*}
where the last inequality follows from Cauchy-Schwarz inequality. 
Thus, we have, 

\begin{multline}
|R_X(t+\eps, s+\nu) - R_X(t, s)| 
          \leq 
               \ave{(X_{t+\eps}-X_t)^2}^{1/2}\ave{(X_{s+\nu}-X_s)^2}^{1/2}\\ 
               +\ave{(X_{t+\eps} - X_t)}^{1/2}\ave{X_s^2}^{1/2}
               + \ave{(X_{s+\nu} - X_s)^2}^{1/2}\ave{X_t^2}^{1/2},
\end{multline}
and therefore, by mean-square continuity of $X_t$ we have that 
\[
\lim_{(\eps, \nu) \to (0,0)} |R_X(t+\eps, s+\nu) - R_X(t, s)| = 0.
\qedhere\]
\end{proof}

\section{Karhunen--Lo\`{e}ve expansion} \label{sec:KL}
Let $D \subseteq \R$. In this section, we assume that 
$X : D \times \Omega \to \R$ is a  
\emph{centered} \emph{mean-square continuous} stochastic process
such that $X \in L^2(D \times \Omega)$. With the technical tools
from the previous sections, we are now ready to derive the
KL expansion of $X$. 
 
Define the integral operator $K:L^2(D) \to L^2(D)$ by 
\begin{equation} \label{equ:autocorr}
   [K u](s) = \int_D k(s,t) u(t) \, dt, \quad k(s,t) = R_X(s,t),
\end{equation} 
The following lemma summarizes the properties 
of the operator $K$. 
\begin{lemma}
Let $K:L^2(D) \to L^2(D)$ be as in~\eqref{equ:autocorr}. Then
the following hold:
\begin{enumerate}
\item $K$ is compact. 
\item $K$ is positive
\item $K$ is self-adjoint.
\end{enumerate}
\end{lemma}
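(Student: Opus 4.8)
The plan is to deduce all three properties from results already established. First I would observe that, since $X$ is mean-square continuous on $D=[a,b]$, Lemma~\ref{lem:cont} guarantees that the kernel $k=R_X$ is continuous on $D\times D$; being continuous on a compact set, $k$ is bounded and hence lies in $L^2(D\times D)$, i.e.\ it is a Hilbert--Schmidt kernel. Claim (1) is then immediate from Lemma~\ref{lem:compactop}. For claim (3) I would use the symmetry $R_X(s,t)=\ave{X_s X_t}=\ave{X_t X_s}=R_X(t,s)$: by the example at the end of Section~\ref{sec:compact}, a Hilbert--Schmidt operator with symmetric kernel is self-adjoint; spelled out, $\ip{Ku}{v}=\int_D\int_D k(s,t)u(t)v(s)\,dt\,ds$ is unchanged under exchanging the labels $s$ and $t$ once we invoke the symmetry of $k$ and Fubini's theorem.

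The real content is positivity, claim (2). For $u\in L^2(D)$ I would write
\[
   \ip{Ku}{u}=\int_D\int_D R_X(s,t)\,u(s)u(t)\,ds\,dt=\int_D\int_D \ave{X_s X_t}\,u(s)u(t)\,ds\,dt,
\]
and the aim is to recognize the right-hand side as $\ave{\big(\int_D X_t u(t)\,dt\big)^2}\ge 0$. To make this precise, set $Z(\omega)=\int_D X_t(\omega)u(t)\,dt$. Since $X\in L^2(D\times\Omega)$, Tonelli's theorem gives $X(\cdot,\omega)\in L^2(D)$ for $P$-a.e.\ $\omega$, so by Cauchy--Schwarz $Z$ is well defined and finite almost surely, and a standard approximation (or Fubini) argument shows $Z$ is $\F$-measurable. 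Then
\[
   \ave{Z^2}=\ave{\Big(\int_D X_s u(s)\,ds\Big)\Big(\int_D X_t u(t)\,dt\Big)}=\ave{\int_D\int_D X_s X_t\,u(s)u(t)\,ds\,dt},
\]
and interchanging $\ave{\cdot}$ with the double integral turns the right-hand side into $\ip{Ku}{u}$; since $\ave{Z^2}\ge 0$, positivity follows.

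The one delicate point — and the main, though still routine, obstacle — is justifying the interchange of expectation with the spatial integration (and the analogous step used for self-adjointness). I would apply Fubini's theorem to $(s,t,\omega)\mapsto X_s(\omega)X_t(\omega)u(s)u(t)$ on $D\times D\times\Omega$, verifying absolute integrability via Tonelli and the Cauchy--Schwarz inequality in $L^2(\Omega)$:
\[
   \int_D\int_D \ave{|X_s X_t|}\,|u(s)||u(t)|\,ds\,dt \le \Big(\int_D |u(s)|\sqrt{R_X(s,s)}\,ds\Big)^2 < \infty,
\]
where finiteness uses that $R_X$ is bounded on the compact $D\times D$ and that $u\in L^2(D)\subseteq L^1(D)$ on the bounded domain $D$. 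This single estimate legitimizes both the Fubini step in the proof of positivity and the one in the proof of self-adjointness, so (2) and (3) ultimately rest on the same computation.
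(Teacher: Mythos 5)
Your proof is correct and follows essentially the same route as the paper's: compactness via Lemma~\ref{lem:cont} and Lemma~\ref{lem:compactop}, positivity by recognizing $\ip{Ku}{u}$ as $\ave{\big(\int_D X_t u(t)\,dt\big)^2}$, and self-adjointness from the symmetry of $R_X$ together with Fubini. The only difference is that you explicitly verify the integrability hypothesis needed for the Fubini interchange, which the paper invokes without justification; this is a welcome addition but not a different approach.
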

\begin{proof}

(1) Since the process $X$ is mean-square continuous, 
Lemma~\ref{lem:cont} implies that $k(s,t) = R_X(s,t)$ is
continuous. Therefore, by Lemma~\ref{lem:compactop}, 
$K$ is compact.   

(2) We need to show $\ip{K u}{u} \ge 0$ for every $u \in L^2(D)$, where
$\ip{\cdot}{\cdot}$ denotes the $L^2(D)$ inner product. 
\begin{align*}
   \ip{K u}{u} = \int_D Ku(s) u(s) \, ds  
               &=\int_D \Big(\int_D k(s,t)u(t) \, dt\Big) u(s)\,ds
               \\
               &=\int_D \Big(\int_D \ave{X_s X_t} u(t) \, dt\Big) u(s)\,ds
               \\
               &=\ave{\int_D \int_D X_s X_t u(t) u(s) \, dt \,ds}
               \\
               &=\ave{ \Big(\int_D X_s u(s) \,ds\Big) 
                       \Big(\int_D X_t u(t) \, dt\Big)}
               \\
               &=\ave{ \Big(\int_D X_t u(t) \,dt\Big)^2} \geq 0,
\end{align*}
where we used Fubini's Theorem to interchange integrals.  

(3) This follows trivially from $R_X(s,t) = R_X(t,s)$
and Fubini's theorem:
\[
   \ip{K u}{v} = \int_D Ku(s) v(s) \, ds  
               =\int_D \Big(\int_D k(t,s)v(s) \, ds\Big) u(t)\,dt
               =\ip{u}{Kv}.
\qedhere\]
\end{proof}

Now, let $K$ be defined as in~\eqref{equ:autocorr} the previous lemma
allows us to invoke the spectral theorem for compact self-adjoint operators
to conclude that $K$ has a complete set of eigenvectors $\{e_i\}$ in 
$L^2(D)$ and real eigenvalues $\{\lambda_i\}$:
\begin{equation}\label{equ:eigen}
 K e_i = \lambda_i e_i. 
\end{equation}
Moreover, since $K$ is positive, the
eigenvalues $\lambda_i$ are non-negative (and have zero as 
the only possible accumulation point). Now, the stochastic 
process $X$ which we fixed in the beginning of this section is assumed
to be square integrable on $D \times \Omega$ and thus, we may use
the basis $\{e_i\}$ of $L^2(D)$ to expand $X_t$ as follows,
\begin{equation} \label{equ:basic-conv}
   X_t = \sum_i x_i e_i(t),
\quad x_i = \int_D X_t e_i(t) \, dt
\end{equation}
The above
equality is to be understood in mean square sense. To be most specific, 
at this point we have that the realizations $\hat{X}$ of the stochastic
process $X$ admit the expansion  
\[
   \hat{X} = \sum_i x_i e_i
\]
where the convergence is in $L^2(D \times \Omega)$. We will see shortly that 
the result is in fact stronger, and we have 
\[
   \lim_{N \to \infty} \ave{ \Big(X_t - \sum_{i=1}^N x_i e_i(t)\Big)^2 } = 0, 
\]
uniformly in $D$, and thus, as a consequence, we have that~\eqref{equ:basic-conv} 
holds for all $t \in D$. 
Before proving this, we examine 
the coefficients $x_i$ in~\eqref{equ:basic-conv}. Note that $x_i$ 
are random variables on $\Omega$. The following 
lemma summarizes the properties of the coefficients $x_i$.
\begin{lemma} \label{lem:coeffs}
The coefficients $x_i$ in~\eqref{equ:basic-conv} satisfy the following: 
\begin{enumerate}
\item $\displaystyle \ave{x_i} = 0$
\item $\displaystyle \ave{x_i x_j} = \delta_{ij} \lambda_j$.
\item $\displaystyle \var{x_i} = \lambda_i$.
\end{enumerate}
\end{lemma}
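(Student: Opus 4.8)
The plan is to obtain all three identities by direct computation, the one genuinely nontrivial ingredient being the interchange of the expectation $\ave{\cdot}$ with the spatial integrals defining $x_i$. Before anything else I would justify this interchange via Fubini's theorem. Since $X \in L^2(D\times\Omega)$, for $P$-a.e.\ $\omega$ the realization $X_{\cdot}(\omega)$ belongs to $L^2(D)$, and by Cauchy--Schwarz (first in $t$, then in $\omega$, using $P(\Omega)=1$) one has
\[
\ave{\int_D |X_t\, e_i(t)|\,dt} \le \Norm{e_i}{L^2(D)}\,\ave{\Norm{X_{\cdot}}{L^2(D)}} \le \Norm{e_i}{L^2(D)}\,\Norm{X}{L^2(D\times\Omega)} < \infty,
\]
so the order of $\int_\Omega$ and $\int_D$ may be swapped in the expression for $\ave{x_i}$. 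Likewise one checks $\ave{\int_D\int_D |X_s X_t\, e_i(s) e_j(t)|\,ds\,dt} \le \Norm{X}{L^2(D\times\Omega)}^2 < \infty$, which licenses the analogous swap for $\ave{x_i x_j}$.

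With Fubini in hand, claim (1) is immediate: $\ave{x_i} = \int_D \ave{X_t}\, e_i(t)\,dt = 0$, because $X$ is centered. For claim (2) I would first write $x_i x_j = \int_D\int_D X_s X_t\, e_i(s) e_j(t)\,ds\,dt$ and then take expectations, obtaining
\[
\ave{x_i x_j} = \int_D\int_D R_X(s,t)\, e_i(s)\, e_j(t)\,ds\,dt = \int_D \Big(\int_D R_X(t,s)\, e_i(s)\,ds\Big) e_j(t)\,dt,
\]
where I have used the symmetry $R_X(s,t) = R_X(t,s)$. The inner integral is precisely $[K e_i](t)$ from~\eqref{equ:autocorr}, which by the eigenvalue relation~\eqref{equ:eigen} equals $\lambda_i e_i(t)$; hence $\ave{x_i x_j} = \lambda_i \int_D e_i(t) e_j(t)\,dt = \lambda_i \delta_{ij}$ by orthonormality of the eigenbasis $\{e_i\}$ in $L^2(D)$. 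Claim (3) then follows at once from (1) and (2), since $\var{x_i} = \ave{x_i^2} - \ave{x_i}^2 = \lambda_i$.

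The only real obstacle is the measure-theoretic bookkeeping in the first step — verifying the integrability hypotheses for Fubini and recording that $X_t\in L^2(\Omega)$ for a.e.\ $t$, which is subsumed by the standing assumption $X\in L^2(D\times\Omega)$ — and this is where I would be most careful. Everything after that is a short manipulation using the eigenvalue equation and the orthonormality of $\{e_i\}$, so the body of the proof will be brief.
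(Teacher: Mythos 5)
Your proposal is correct and follows essentially the same route as the paper: Fubini to pull the expectation inside the spatial integrals, the eigenvalue relation $Ke_i = \lambda_i e_i$ together with orthonormality for (2), and (3) as an immediate consequence of (1) and (2). The only difference is that you explicitly verify the integrability hypotheses for Fubini via Cauchy--Schwarz, which the paper takes for granted; that is a welcome extra bit of rigor but not a different argument.
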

\begin{proof}
To see the first assertion note that 
\begin{align*}
\ave{x_i} &= \ave{ \int_D X_t e_i(t) \, dt } \\
          &= \int_\Omega \int_D X_t(\omega) e_i(t) \, dt \, dP(\omega) \\
          &= \int_D \int_\Omega X_t(\omega) e_i(t) \, dP(\omega) \, dt 
             \quad \text{(Fubini)}\\
          &= \int_D \ave{X_t} e_i(t) \, dt = 0, 
\end{align*}
where the last conclusion follows from $\ave{X_t} = 0$ ($X$ is a centered
process).
To see the second assertion, we proceed as follows
\begin{align*}
\ave{x_ix_j} &= \ave{ \Big(\int_D X_s e_i(s) \, ds\Big)
                      \Big(\int_D X_t e_j(t) \, dt\Big)} \\
             &= \ave{ \int_D \int_D X_s e_i(s) 
                      X_t e_j(t) \, ds \, dt} \\
             &= \int_D \int_D \ave{X_s X_t} e_i(s) e_j(t) \, ds \, dt \\
             &= \int_D \Big(\int_D k(s,t) e_j(t) \, dt\Big)e_i(s) \, ds \\
             &= \int_D [K e_j](s) e_i(s) \, ds \qquad \text{(from~\eqref{equ:autocorr})}\\
             &= \ip{Ke_j}{e_i}\\ 
             &= \ip{\lambda_j e_j}{e_i} 
             = \lambda_j \delta_{ij},
\end{align*}
where again we have used Fubini's Theorem to interchange integrals and
the last conclusion follows from orthonormality of eigenvectors of $K$.
The assertion (3) of the lemma follows easily from (1) and (2):
\[
   \var{x_i} = \ave{ (x_i - \ave{x_i})^2} = \ave{x_i^2} = \lambda_i.\qedhere  
\] 
\end{proof}

Now, we have the technical tools to prove the following:
\begin{theorem}[Karhunen--Lo\`{e}ve] \label{thm:KL}
Let $X:D \times \Omega \to \R$ be a centered mean-square
continuous stochastic process with $X \in L^2(\Omega \times D)$. 
There exist a basis $\{ e_i \}$ of $L^2(D)$ such that for 
all $t \in D$,
\[
\displaystyle X_t = \sum_{i=1}^\infty x_i e_i(t), \quad \text{ in } L^2(\Omega),
\]
where coefficients $x_i$ are 
given by 
$x_i(\omega) = \int_D X_t(\omega) e_i(t) \, dt$ and satisfy the following.
\begin{enumerate}
\item $\displaystyle \ave{x_i} = 0$
\item $\displaystyle \ave{x_i x_j} = \delta_{ij} \lambda_j$.
\item $\displaystyle \var{x_i} = \lambda_i$.
\end{enumerate}
\end{theorem}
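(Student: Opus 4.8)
The plan is to reduce the theorem to Mercer's Theorem~\ref{thm:mercer}, since the coefficient identities (1)--(3) have already been established in Lemma~\ref{lem:coeffs}; the only substantive point left is the convergence $X_t = \sum_i x_i e_i(t)$ in $L^2(\Omega)$ for each fixed $t \in D$, in fact uniformly in $t$.

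First I would collect the facts already in hand. The operator $K$ of~\eqref{equ:autocorr} with kernel $k = R_X$ is compact, positive and self-adjoint, so by the spectral theorem its eigenvectors $\{e_i\}$ form a complete orthonormal basis of $L^2(D)$ with eigenvalues $\lambda_i \ge 0$; this is the basis named in the statement, and completeness already yields $\hat X = \sum_i x_i e_i$ with convergence in $L^2(D \times \Omega)$. Since $X$ is mean-square continuous, Lemma~\ref{lem:cont} shows $R_X$ is continuous on $D \times D$, and $K$ is positive, so Mercer's Theorem applies and gives
\[
R_X(s,t) = \sum_i \lambda_i e_i(s) e_i(t)
\]
with absolute and uniform convergence on $D \times D$; in particular, putting $s = t$, the series $\sum_i \lambda_i e_i(t)^2$ converges to $R_X(t,t)$ uniformly in $t \in D$.

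Next I would compute the mean-square truncation error directly. Expanding the square and using linearity of expectation,
\[
\ave{\Big(X_t - \sum_{i=1}^N x_i e_i(t)\Big)^2}
= \ave{X_t^2} - 2\sum_{i=1}^N e_i(t)\,\ave{x_i X_t} + \sum_{i,j=1}^N e_i(t)e_j(t)\,\ave{x_i x_j}.
\]
Here $\ave{X_t^2} = R_X(t,t)$ by definition of $R_X$; by Fubini's Theorem (valid since $X \in L^2(D \times \Omega)$) one gets $\ave{x_i X_t} = \int_D R_X(t,s) e_i(s)\,ds = [K e_i](t) = \lambda_i e_i(t)$; and $\ave{x_i x_j} = \delta_{ij}\lambda_j$ is Lemma~\ref{lem:coeffs}(2). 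Substituting, the middle term contributes $-2\sum_{i=1}^N \lambda_i e_i(t)^2$ and the last term contributes $+\sum_{i=1}^N \lambda_i e_i(t)^2$, so the error collapses to
\[
\ave{\Big(X_t - \sum_{i=1}^N x_i e_i(t)\Big)^2} = R_X(t,t) - \sum_{i=1}^N \lambda_i e_i(t)^2.
\]
By the Mercer expansion evaluated at $s = t$, the right-hand side is the tail of a uniformly convergent series with sum $R_X(t,t)$, hence it tends to $0$ as $N \to \infty$, uniformly in $t \in D$; this gives the claimed convergence, and properties (1)--(3) are quoted verbatim from Lemma~\ref{lem:coeffs}.

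I expect the only real obstacle to be the step that genuinely needs Mercer's Theorem: abstract Hilbert-space completeness of $\{e_i\}$ only delivers convergence in $L^2(D \times \Omega)$, whereas the theorem asserts the stronger pointwise-in-$t$ (indeed uniform) statement, and this upgrade rests entirely on the uniform convergence of $\sum_i \lambda_i e_i(t)^2$ to the diagonal $R_X(t,t)$, which is exactly what Mercer supplies and what would be hard to obtain by other means. Everything else — the Fubini interchanges and the expansion of the square — is routine once square integrability and the earlier lemmas are in place.
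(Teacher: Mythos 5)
Your proposal is correct and follows essentially the same path as the paper's proof: expand the mean-square truncation error, use Fubini and the eigenrelation $Ke_i = \lambda_i e_i$ to collapse it to $R_X(t,t) - \sum_{i=1}^N \lambda_i e_i(t)^2$, and conclude by the uniform convergence on the diagonal supplied by Mercer's Theorem. The coefficient properties are likewise quoted from Lemma~\ref{lem:coeffs} in both arguments.
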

\begin{proof}
Let $K$ be the Hilbert-Schmidt operator defined as in~\eqref{equ:autocorr}.
We know that $K$ has a complete set of eigenvectors $\{e_i\}$ in $L^2(D)$ and 
non-negative eigenvalues $\{ \lambda_i \}$. 
Note that $x_i(\omega) = \int_D X_t(\omega) e_i(t) \, dt$
satisfy the 
the properties (1)-(3) by Lemma~\ref{lem:coeffs}. 
Next, consider
\[
   \eps_n(t) := \ave{ \Big(X_t - \sum_{i=1}^n x_i e_i(t)\Big)^2 }.
\]
The rest of the proof amounts to showing 
$\displaystyle \lim_{n \to \infty}\eps_n(t) = 0$ uniformly (and
hence pointwise) in $D$. 
\begin{equation} \label{equ:exp-I}
\begin{aligned}
\eps_n(t) &= \ave{ \Big(X_t - \sum_{i=1}^n x_i e_i(t)\Big)^2 } \\ 
          &= \ave{ X_t^2} - 2 \ave{X_t \sum_{i=1}^n x_i e_i(t)} + 
                              \ave{\sum_{i,j=1}^n x_i x_j e_i(t)e_j(t)}
\end{aligned}
\end{equation}
Now, $\ave{X_t^2} = k(t,t)$ with $k$ as in~\eqref{equ:autocorr}, 
\begin{align}
\ave{X_t \sum_{i=1}^n x_i e_i(t)} &= 
   \ave{X_t \sum_{i=1}^n \big(\int_D X_s e_i(s) \, ds\big) e_i(t)} 
   \notag \\ 
   &= \sum_{i=1}^n \Big(\int_D \ave{X_t X_s} e_i(s) \, ds\Big) e_i(t) 
   \notag \\
   &= {\sum_{i=1}^n \Big(\int_D k(t,s) e_i(s) \, ds\Big) e_i(t)} 
   = {\sum_{i=1}^n [K e_i](t) e_i(t)} 
   = \sum_{i=1}^n \lambda_i e_i(t)^2. \label{equ:exp-II} 
\end{align}
Through a similar argument, we can show that
\begin{equation} \label{equ:exp-III}
\ave{\sum_{i,j=1}^n x_i x_j e_i(t)e_j(t)} = \sum_{i=1}^n \lambda_i e_i(t)^2
\end{equation}
Therefore, by~\eqref{equ:exp-I},~\eqref{equ:exp-II}, and~\eqref{equ:exp-III}
we have
\[
\eps_n(t) = k(t,t) - \sum_{i=1}^n \lambda_i e_i(t)e_i(t),
\]
invoking Theorem~\ref{thm:mercer} (Mercer's Theorem) we have 
\[
   \lim_{n\to\infty} \eps_n(t) = 0,
\]
uniformly; this completes the proof.  
\qedhere\end{proof}

\begin{remark}
Suppose $\lambda_k = 0$ for some $k$, and consider 
the coefficient $x_k$ in the expansion~\eqref{equ:basic-conv}. 
Then, we have by the above Theorem $\ave{x_k} = 0$ and
$\var{x_k} = \lambda_k = 0$, and therefore, $x_k = 0$.
That is, the coefficient $x_k$ corresponding to a zero
eigenvalue is zero. 
Therefore, only $x_i$ corresponding to postive eigenvalues $\lambda_i$
appear in KL expansion of a square integrable, centered, and 
mean-square continous stochastic process.
\end{remark}
In the view of the above remark, we can normalize the coefficients $x_i$ in 
a KL expansion and define $\xi_i = \frac{1}{\sqrt{\lambda_i}} x_i$. This leads 
to the following, more familiar, version of Theorem~\ref{thm:KL}.
\begin{corollary}
Let $X:D \times \Omega \to \R$ be a centered mean-square
continuous stochastic process with $X \in L^2(\Omega \times D)$.
There exist a basis $\{ e_i \}$ of $L^2(D)$ such that for
all $t \in D$,
\begin{equation}\label{equ:convL2}
X(t, \omega) = \sum_{i=1}^\infty \sqrt{\lambda_i} \xi_i(\omega) e_i(t)
\quad \text{ in } L^2(\Omega).
\end{equation}
where $\xi_i$ are centered mutually uncorrelated random variables with 
unit variance and are given by,
\[
\xi_i(\omega) = \frac{1}{\sqrt{\lambda_i}}\int_D X_t(\omega) e_i(t) \, dt.
\]
\end{corollary}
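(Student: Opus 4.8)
The plan is to obtain this Corollary directly from Theorem~\ref{thm:KL} by a simple rescaling of the coefficients; there is essentially no new content beyond a change of normalization. First I would recall from Theorem~\ref{thm:KL} that there is an orthonormal basis $\{e_i\}$ of $L^2(D)$ consisting of eigenvectors of the autocorrelation operator $K$ defined in~\eqref{equ:autocorr}, with corresponding non-negative eigenvalues $\{\lambda_i\}$, and that for every $t \in D$ one has $X_t = \sum_i x_i e_i(t)$ in $L^2(\Omega)$, where $x_i(\omega) = \int_D X_t(\omega) e_i(t)\,dt$ satisfies $\ave{x_i} = 0$ and $\ave{x_i x_j} = \delta_{ij}\lambda_j$.

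Next, invoking the Remark immediately preceding the statement, I would discard all indices with $\lambda_i = 0$: for such an index, $x_i$ is a mean-zero random variable with zero variance and hence vanishes almost surely, so it contributes nothing to the expansion. After relabelling we may therefore assume $\lambda_i > 0$ for every index $i$ occurring in the sum, which makes the definition $\xi_i := \frac{1}{\sqrt{\lambda_i}}\, x_i$ meaningful. Substituting $x_i = \sqrt{\lambda_i}\,\xi_i$ into the expansion furnished by Theorem~\ref{thm:KL} yields~\eqref{equ:convL2}, with the $L^2(\Omega)$ convergence (for each fixed $t$) inherited verbatim from that theorem, since multiplying each summand by the fixed scalar $\sqrt{\lambda_i}$ and dividing the coefficient by it leaves the partial sums unchanged.

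Finally, I would check the asserted properties of the $\xi_i$ using only linearity of expectation: $\ave{\xi_i} = \lambda_i^{-1/2}\ave{x_i} = 0$, and $\ave{\xi_i \xi_j} = (\lambda_i \lambda_j)^{-1/2}\ave{x_i x_j} = (\lambda_i \lambda_j)^{-1/2}\,\delta_{ij}\lambda_j = \delta_{ij}$, which says precisely that the $\xi_i$ are centered, mutually uncorrelated, and of unit variance. No step here presents a genuine obstacle; the only point requiring a moment's care is the reindexing, namely ensuring that the division by $\sqrt{\lambda_i}$ is carried out only for strictly positive eigenvalues, which is exactly what the preceding Remark licenses and which explains why the sum in~\eqref{equ:convL2} may legitimately be written over all remaining $i$.
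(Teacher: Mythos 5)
Your proposal is correct and follows exactly the route the paper intends: the corollary is obtained from Theorem~\ref{thm:KL} by discarding the indices with $\lambda_i = 0$ (as licensed by the preceding Remark) and normalizing $\xi_i = \lambda_i^{-1/2} x_i$, with the properties of the $\xi_i$ following from Lemma~\ref{lem:coeffs} by linearity of expectation. Your write-up in fact supplies slightly more detail (the reindexing and the verification of $\ave{\xi_i\xi_j} = \delta_{ij}$) than the paper, which leaves these steps implicit.
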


The KL expansion of a Gaussian process has the further property that $\xi_i$
are independent standard normal random variables (see e.g.~\cite{knio,Ghanem}).
The latter is a useful property in practical applications; for instance, this
is used extensively in the method of stochastic finite element~\cite{Ghanem}.
Moreover, in the case of a Gaussian process, the series representation
in~\eqref{equ:convL2} converges almost surely~\cite{Loeve77}. 

\section{A classical example}\label{sec:example}
Here we consider the KL decomposition of a Gaussian random field $X$, 
which is characterized by its variance $\sigma^2$ and an 
autocorrelation function $R_X(s, t)$ given by,
\begin{equation}\label{equ:kernel}
   R_X(s, t) = \sigma^2 \exp\Big(-\frac{|s-t|}{L_c}\Big).
\end{equation}
We show in Figure~\ref{fig:RX}(a) a plot of $R_X(s, t)$ 
over $[0, 1] \times [0, 1]$.
\begin{figure}[ht]\centering
\begin{tabular}{ccc}
\includegraphics[width=0.33\textwidth]{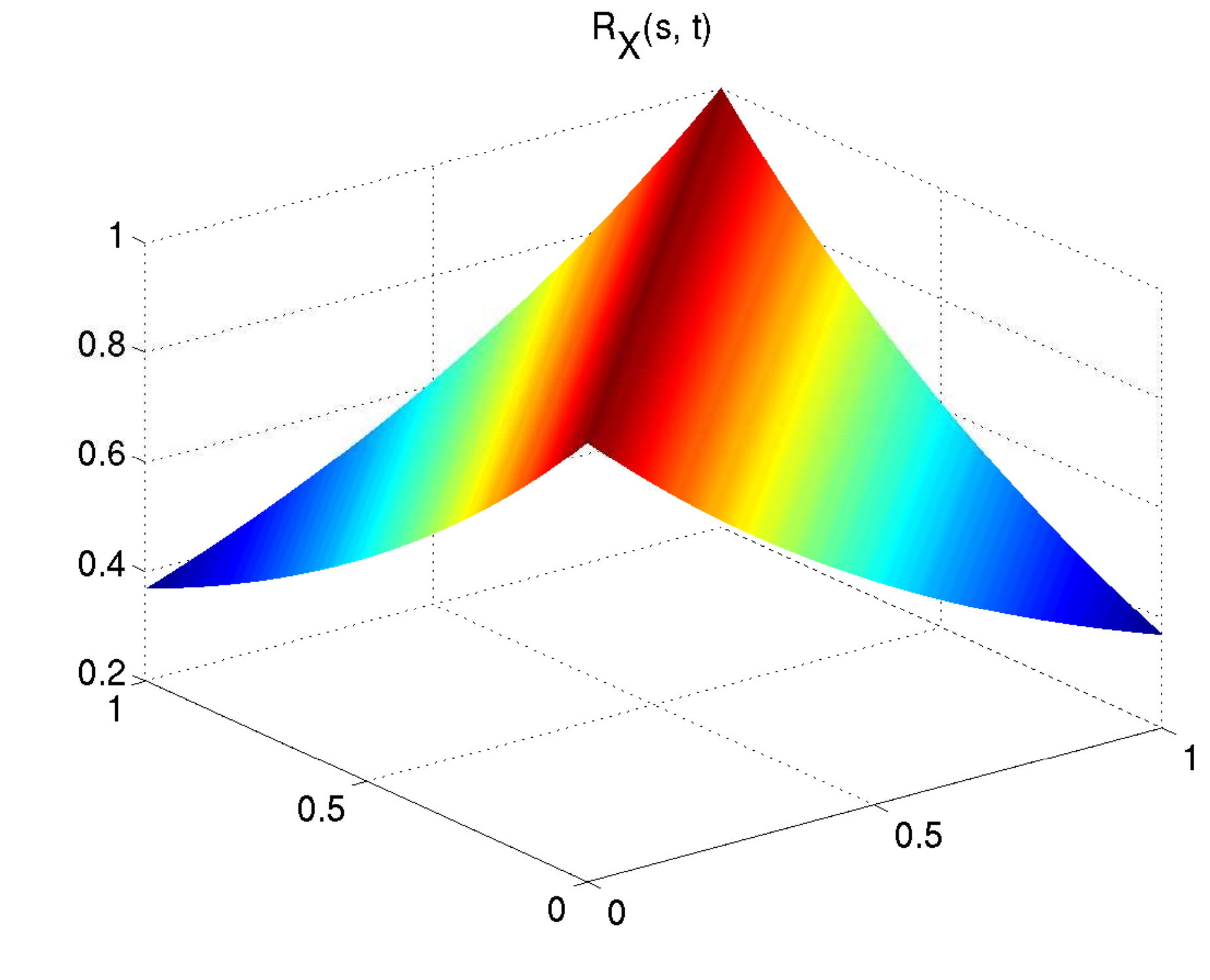}& 
\includegraphics[width=0.3\textwidth]{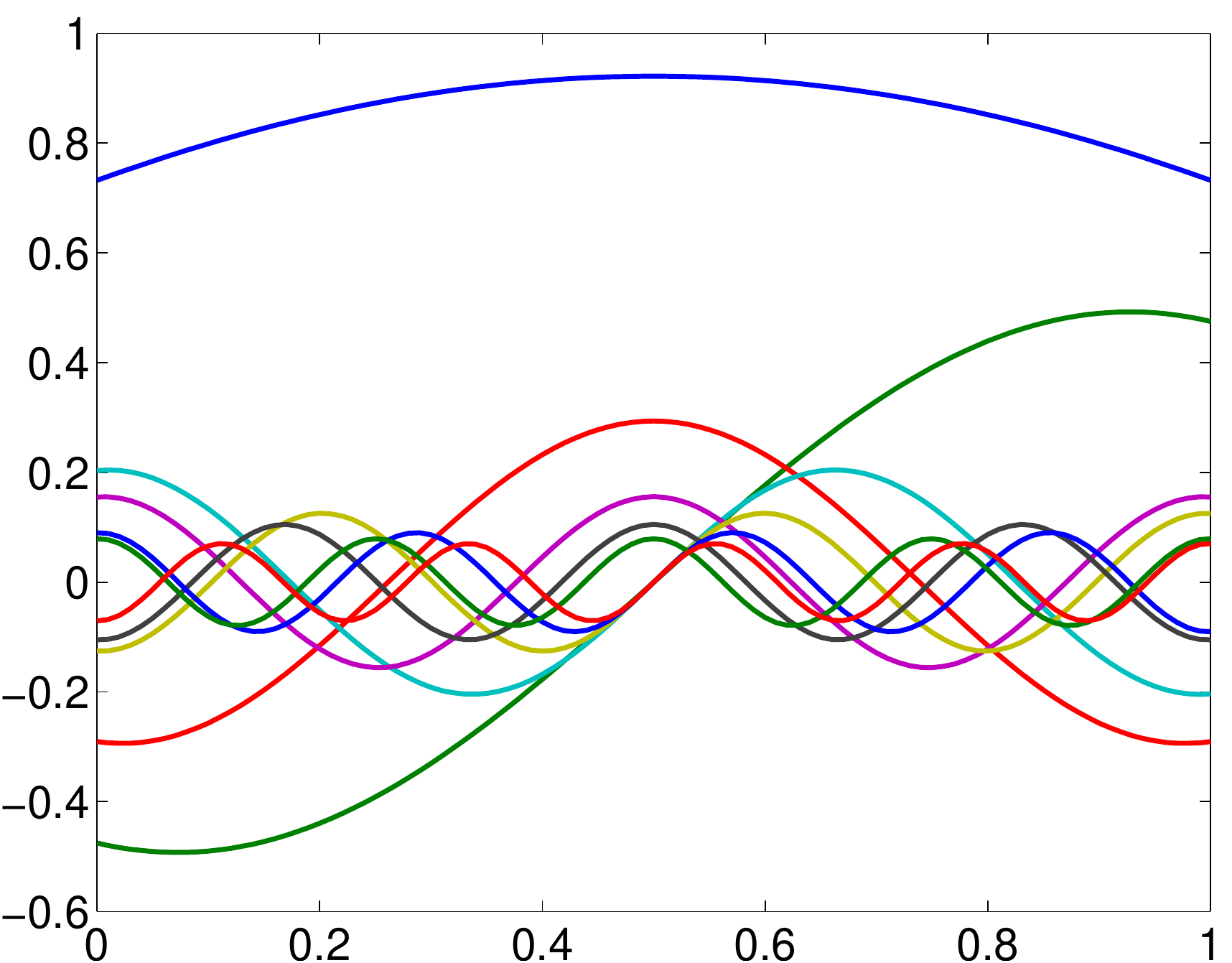} &
\includegraphics[width=0.3\textwidth]{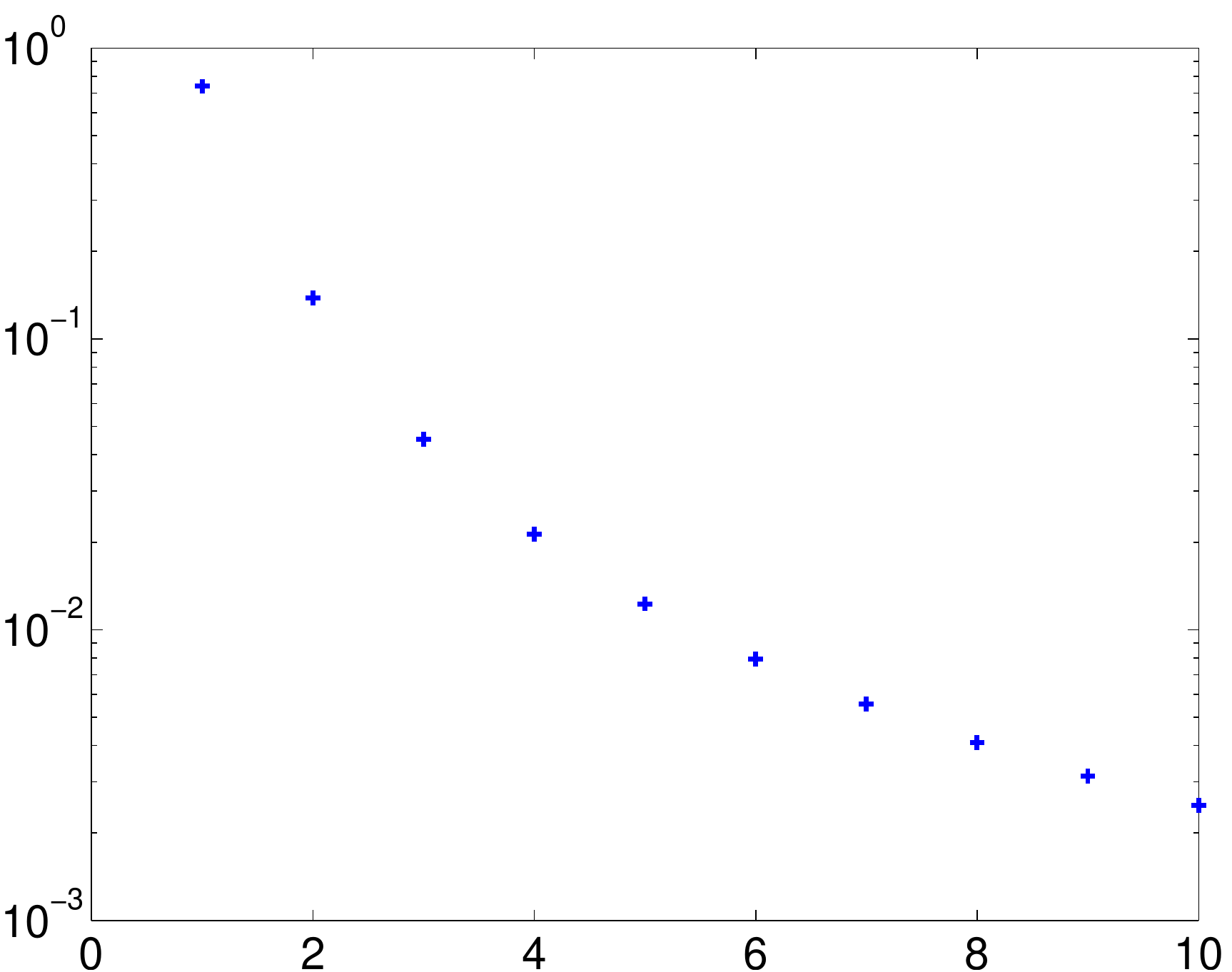}\\
(a) & (b) & (c)
\end{tabular}
\caption{
The autocorrelation function (a); 
the first few eigenfunctions of the integral operator whose kernel is given by the autocorrelation function (b), and 
the corresponding eigenvalues (c).}
\label{fig:RX}
\end{figure}
\paragraph{Spectral decomposition of the autocorrelation function}
For this particular example, the eigenfunctions $e_i(t)$ and
eigenvalues $\lambda_i$ can be computed analytically.
The analytic expression for eigenvalues and eigenvectors
can be found for example in~\cite{Ghanem,knio}.
We consider the case of $\sigma^2 = 1$ and $L_c = 1$ in~\eqref{equ:kernel}.
In Figure~\ref{fig:RX}(b)--(c), we show the first few eigenfunctions
and eigenvalues of the autocorrelation function defined in~\eqref{equ:kernel}.
To get an idea of how fast the approximation, 
\[
   R_{X}^N(s, t) = \sum_{i=1}^N \lambda_i e_i(s)e_i(t)
\]
converges to $R_X(s, t)$ we show in Figure~\ref{fig:conv}
the plots of $R^N_X(s, t)$ for $N = 2, 4, 6, 8$. 
In Figure~\ref{fig:compare}, we see that with $N = 6$, 
absolute error is bounded by $8 \times 10^{-2}$. 
\begin{figure}[ht]\centering
\begin{tabular}{cc}
\includegraphics[width=0.4\textwidth]{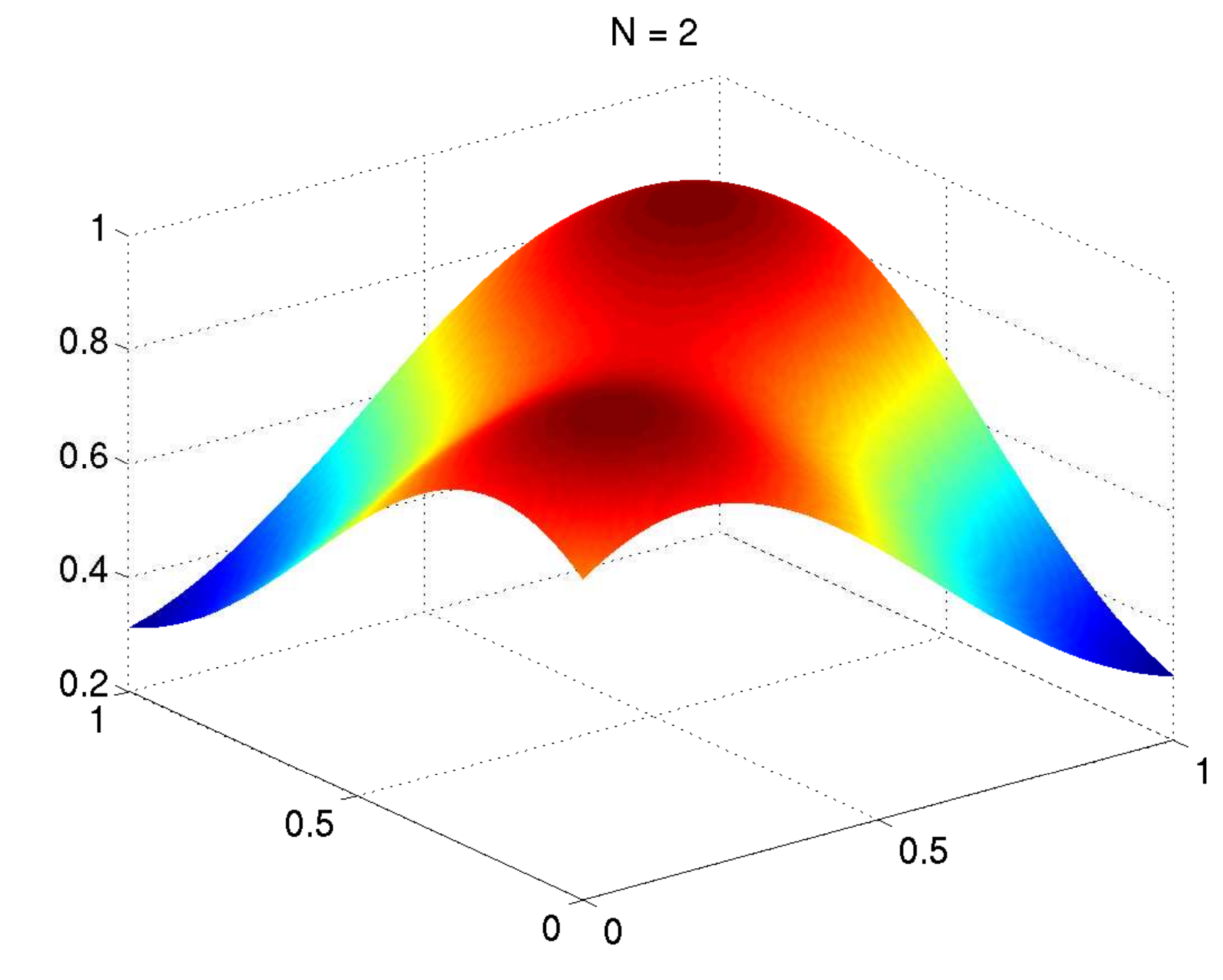} & 
\includegraphics[width=0.4\textwidth]{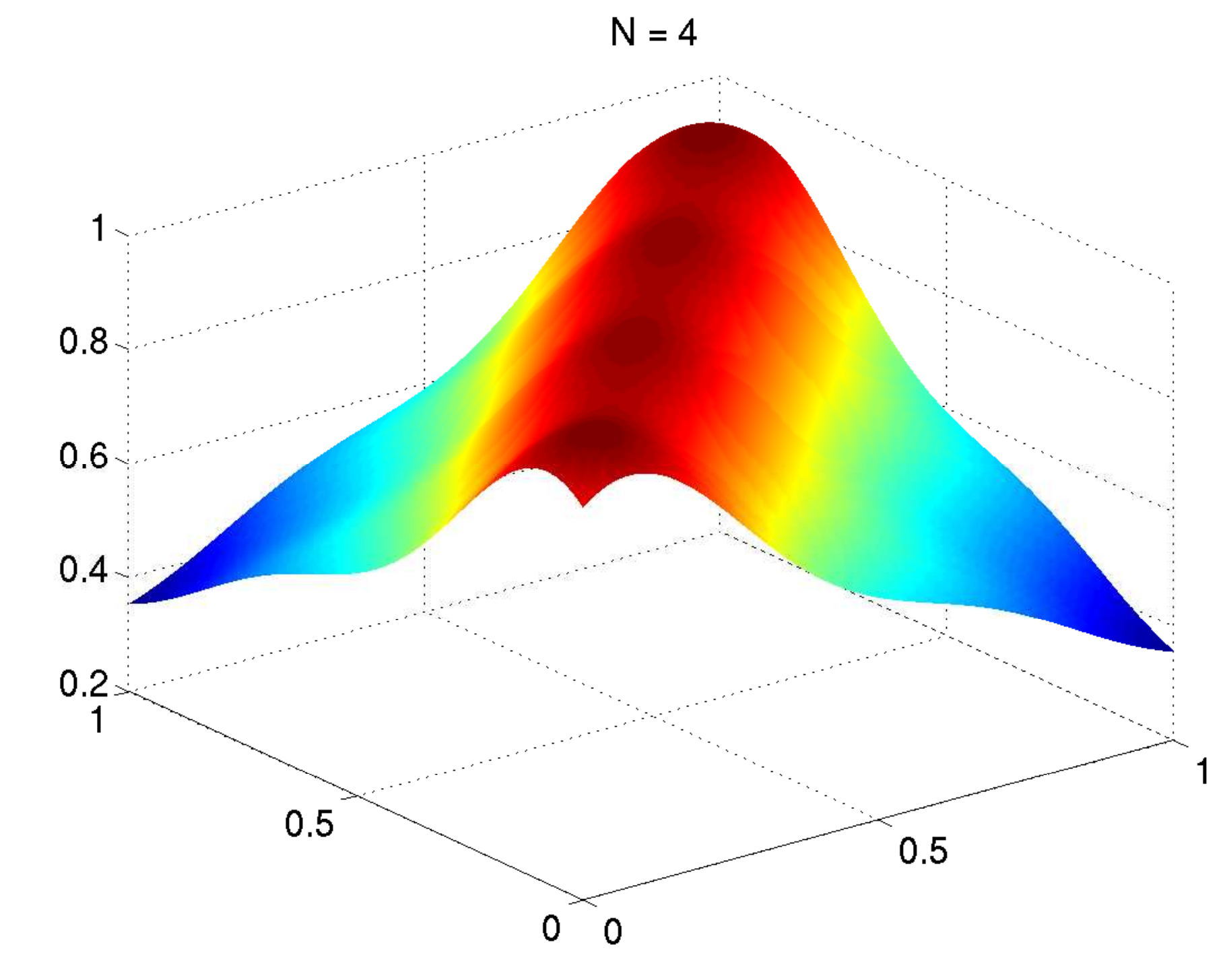} \\ 
\includegraphics[width=0.4\textwidth]{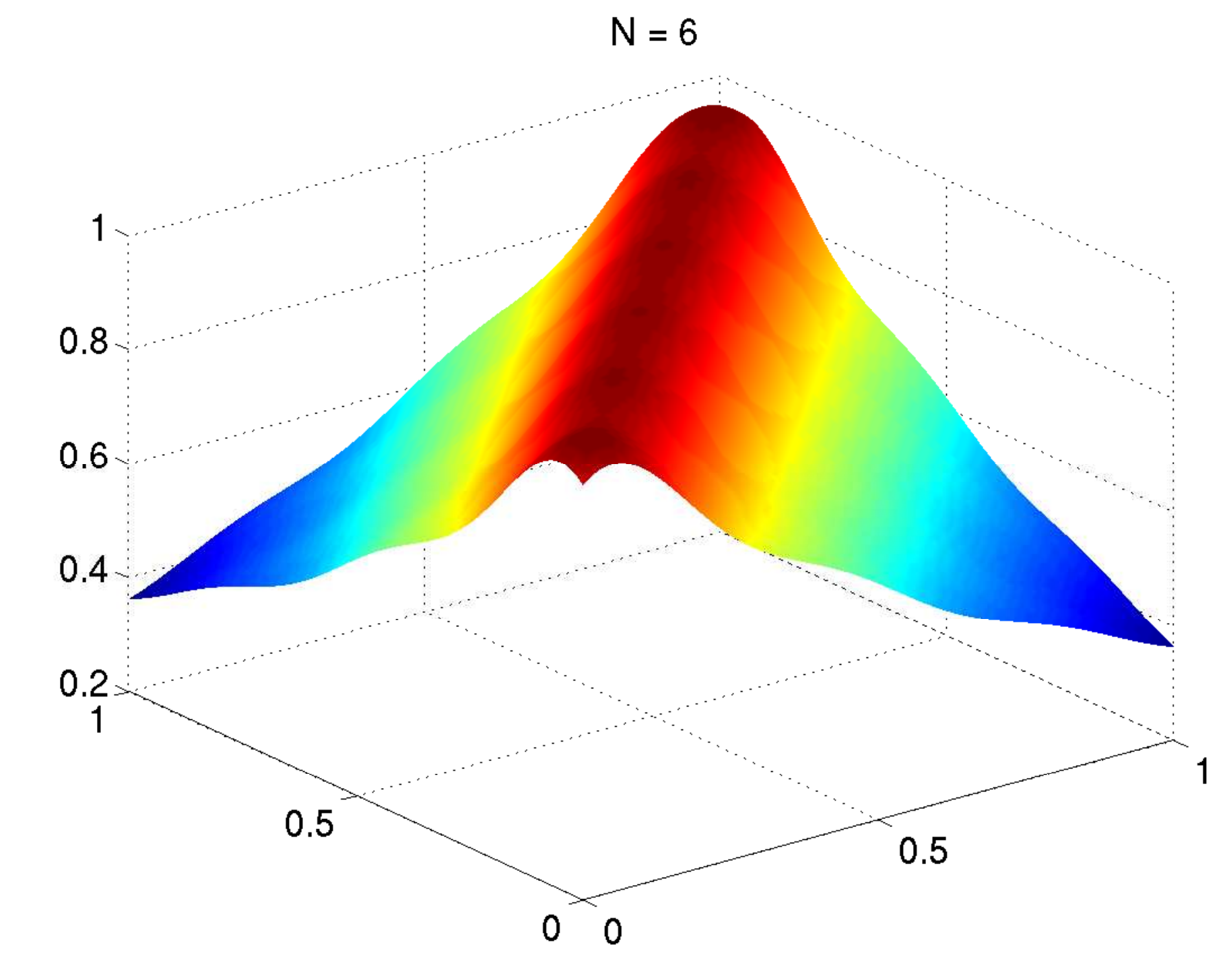} & 
\includegraphics[width=0.4\textwidth]{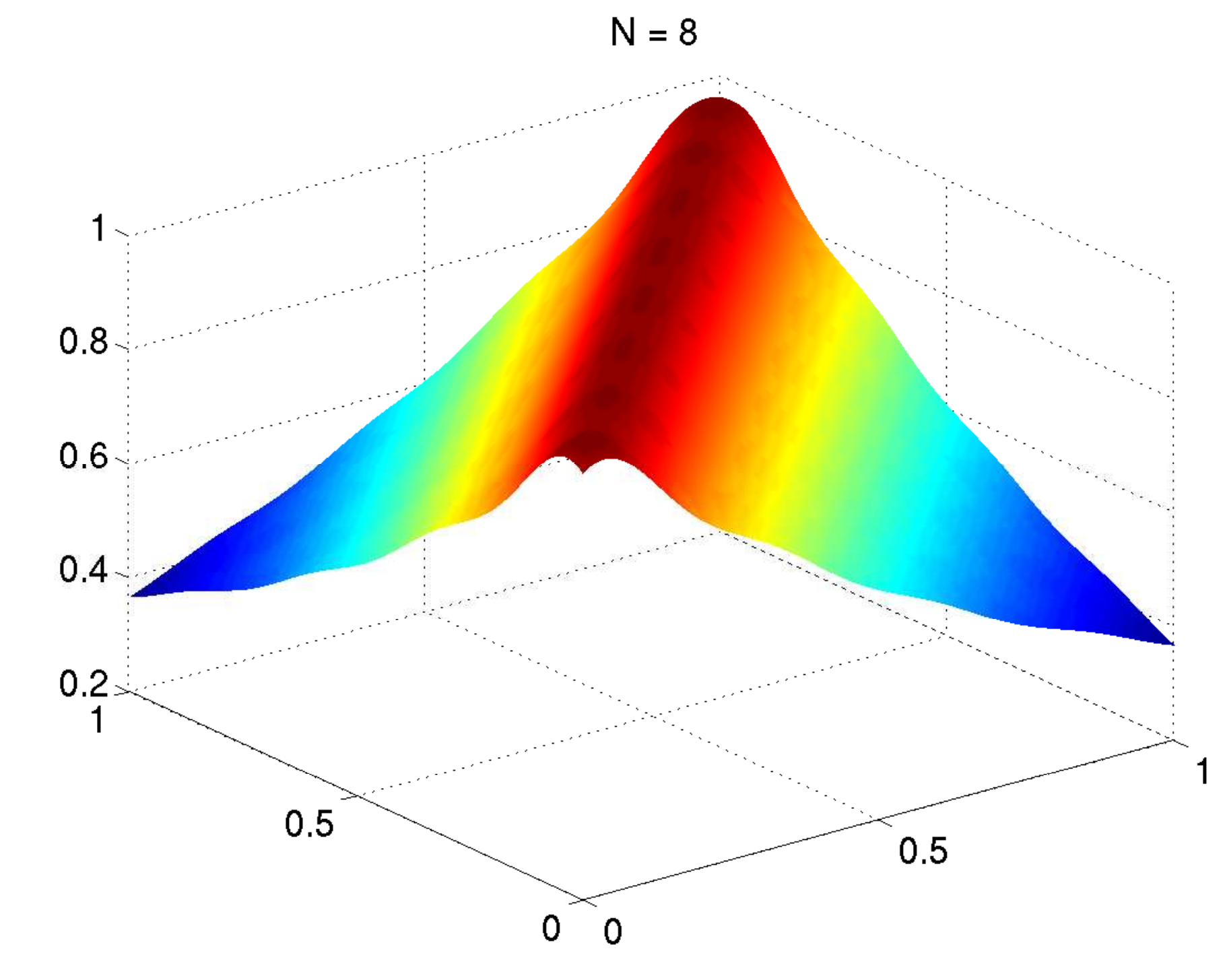}  
\end{tabular}
\caption{Improvements of the approximations to $R_X(s, t)$ as the
expansion order is increased.}
\label{fig:conv}
\end{figure}

\begin{figure}[ht]\centering
\begin{tabular}{ccc}
\includegraphics[width=0.33\textwidth]{kernel.pdf} &
\includegraphics[width=0.33\textwidth]{KernelN=6.pdf} &
\includegraphics[width=0.33\textwidth]{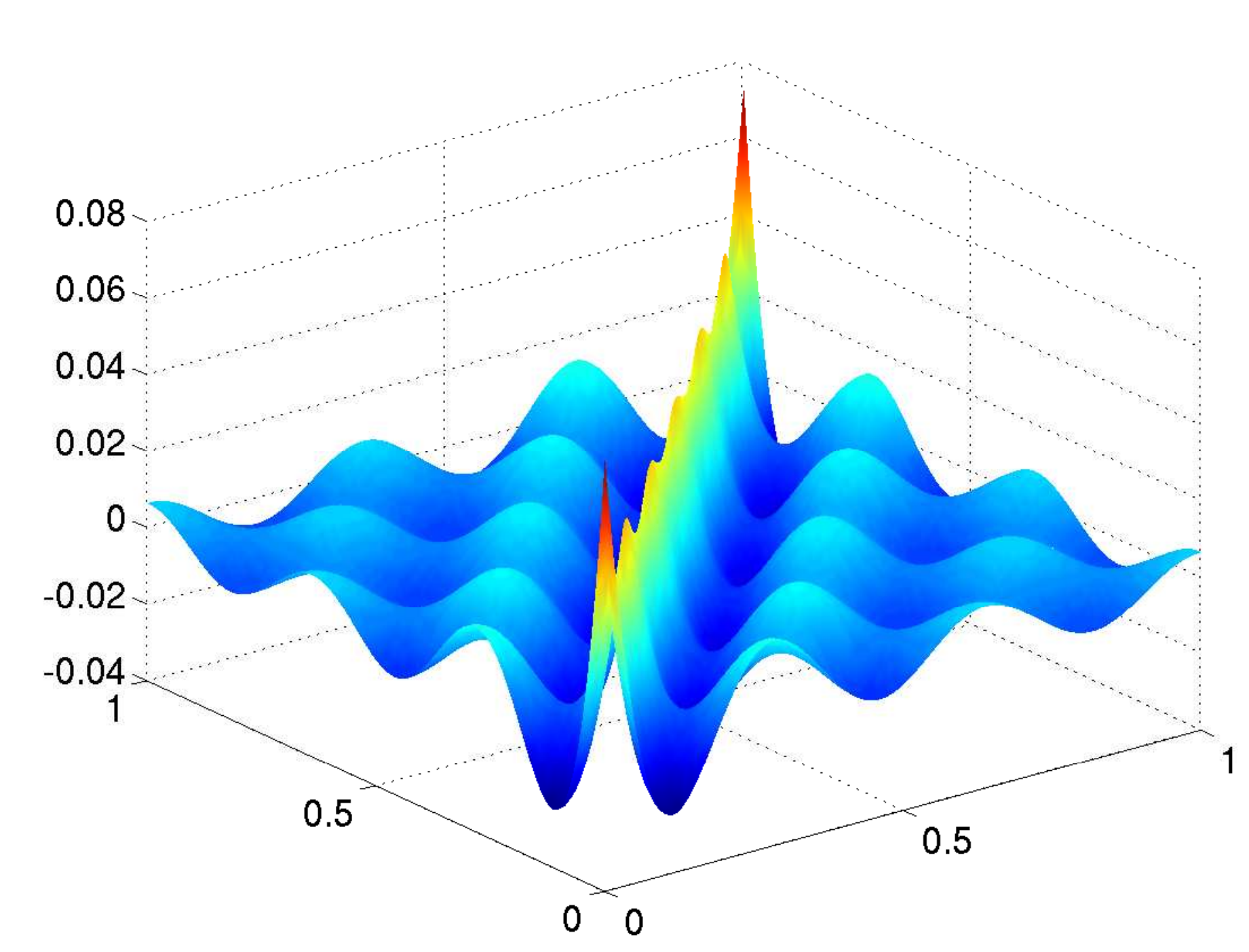} \\
(a) & (b) & (c)
\end{tabular}
\caption{(a) The autocorrelation function $R_X(s, t)$, (b) the approximation $R^N_X(s, t)$ with
$N = 6$, and (c) pointwise difference between $R_X(s, t)$ and $R_X^N(s, t)$ with $N = 6$.}
\label{fig:compare}
\end{figure}

\paragraph{Simulating the random field}
Having the eigenvalues and eigenfunctions of $R_X(t, \omega)$ at hand, we can simulate
the random field $X(t, \omega)$ with a truncated KL expansion, 
\[
X_\text{trunc}^{N}(t, \omega) := \sum_{i=1}^N \sqrt{\lambda_i} \xi_i(\omega) e_i(t).
\]
As discussed before, in this case, $\xi_i$ are independent standard normal
variables. In Figure~\ref{fig:simulate}(a), we plot a few realizations of the
truncated KL expansion of $X(t, \cdot)$, $t \in [0, 1]$ and in
Figure~\ref{fig:simulate}(b), we show the distribution of $X(t, \cdot)$ at $t =
1/2$ versus standard normal distribution. For this experiment we used a low oreder 
KL expansion with $N = 6$
terms.  
\vspace{-3mm}
\begin{figure}[ht]\centering
\begin{tabular}{cc}
\includegraphics[width=0.33\textwidth]{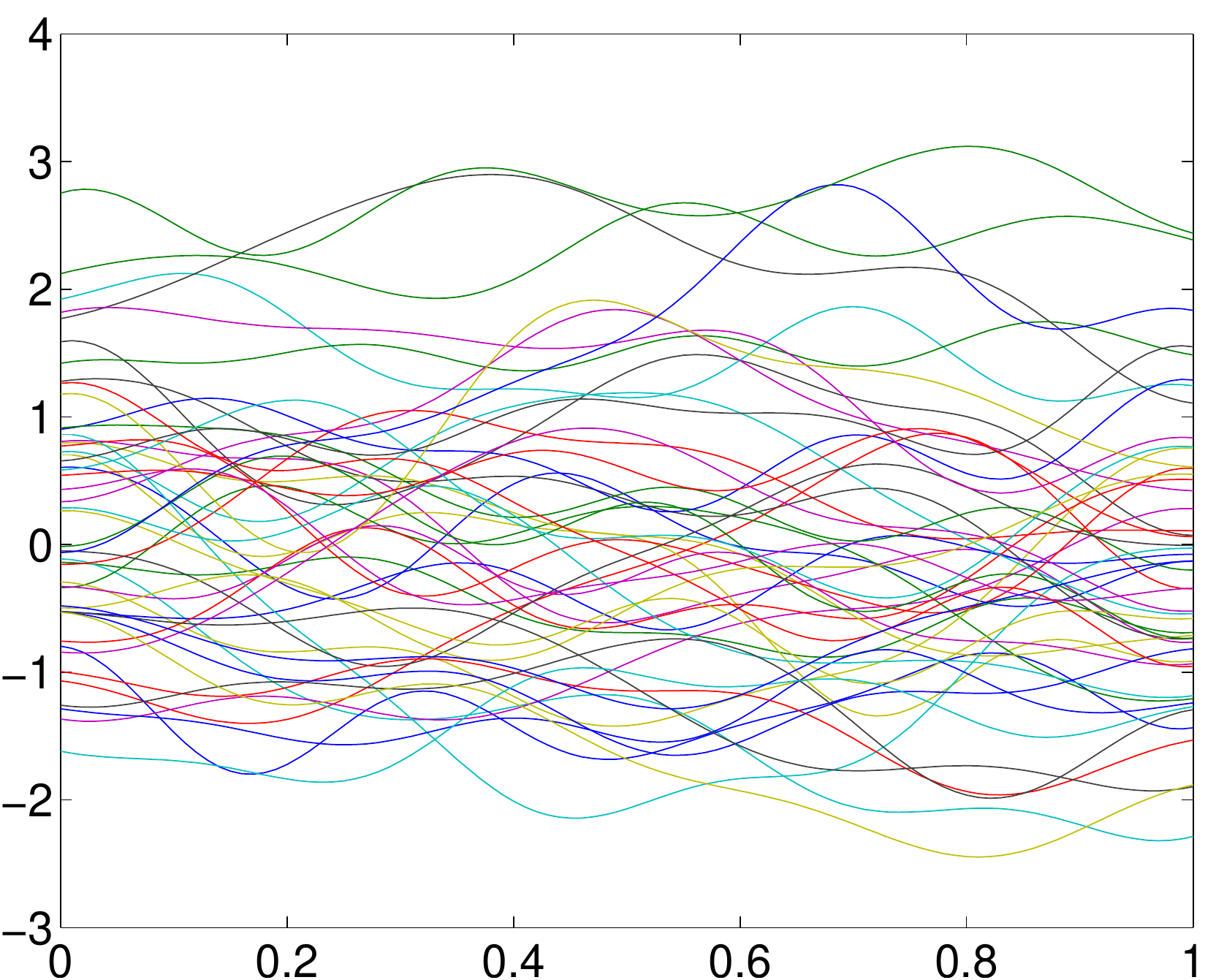} &
\includegraphics[width=0.33\textwidth]{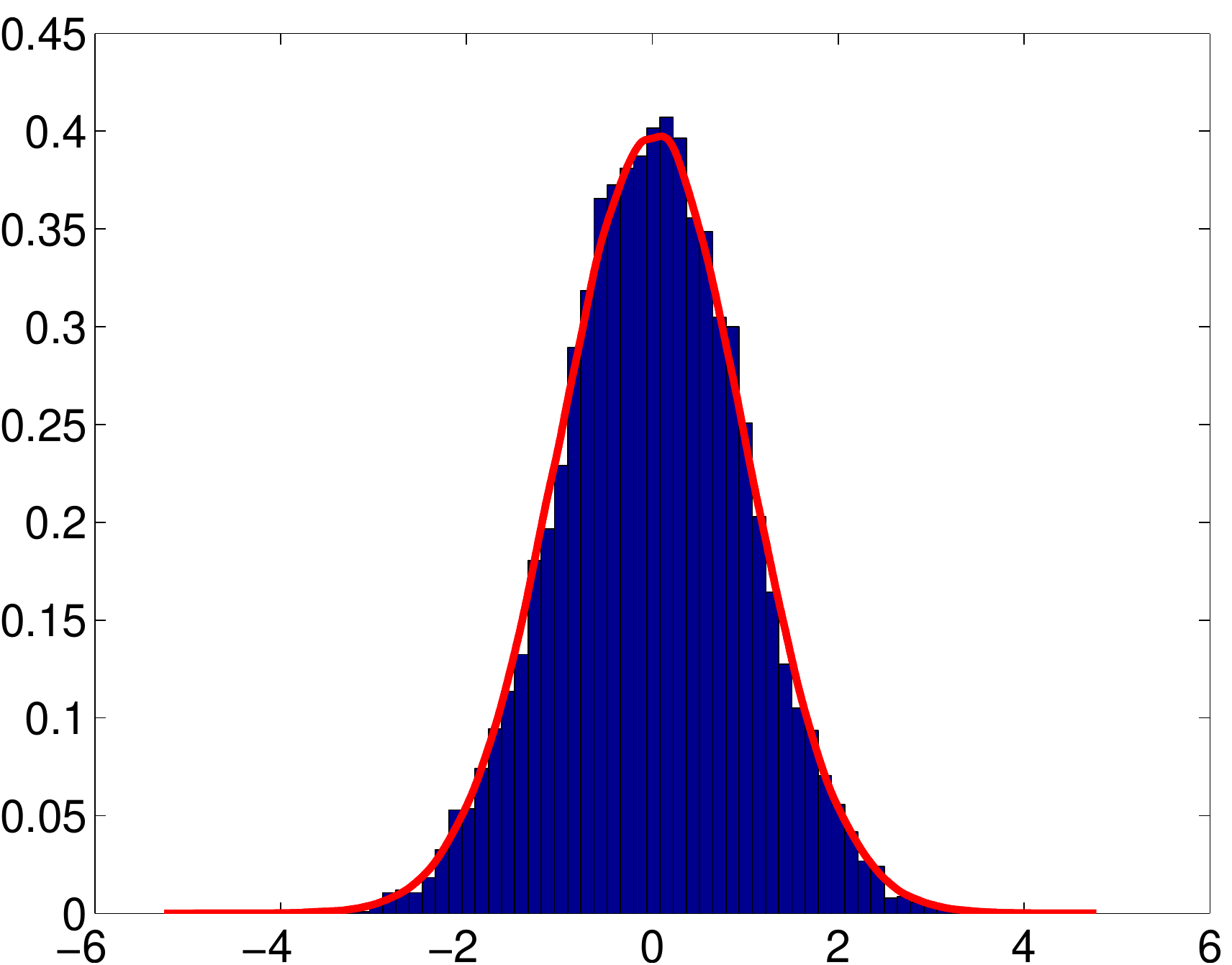}\\
(a) & (b)
\end{tabular}
\caption{(a) A few realizations of the random field $X(t, \cdot)$
approximated by  
a truncated KL expansion with $N = 6$ terms. 
(b) distribution of $X(t, \omega)$ at $t = 1/2$ (blue) versus a standard normal distribution 
(red).}
\label{fig:simulate}
\end{figure}

\paragraph{A final note regarding practical applications of KL expansions}
In practice, when using KL expansions to model uncertainties in mathematical
models, a premature \emph{a priori} truncation of the KL expansion 
could potentially lead to
misleading results, because the effect of the higher order oscillatory modes on the
output of a physical system could be significant.  Also, sampling such a
low-order KL expansion results in realizations of the random field that might
look artificially smooth; see for example the realizations of a low-order KL
expansion reported in Figure~\ref{fig:simulate}. 
In Figure~\ref{fig:simulate_ref} we illustrate the influence of the higher order modes on the
realizations of the truncated KL expansion, in the context of the same example;
in the figure, we consider two fixed realizations of the process, and for each realization
we plot $X_\text{trunc}^N(t, \omega)$ with successively larger values of $N$. 
\begin{figure}[ht]\centering
\includegraphics[width=0.4\textwidth]{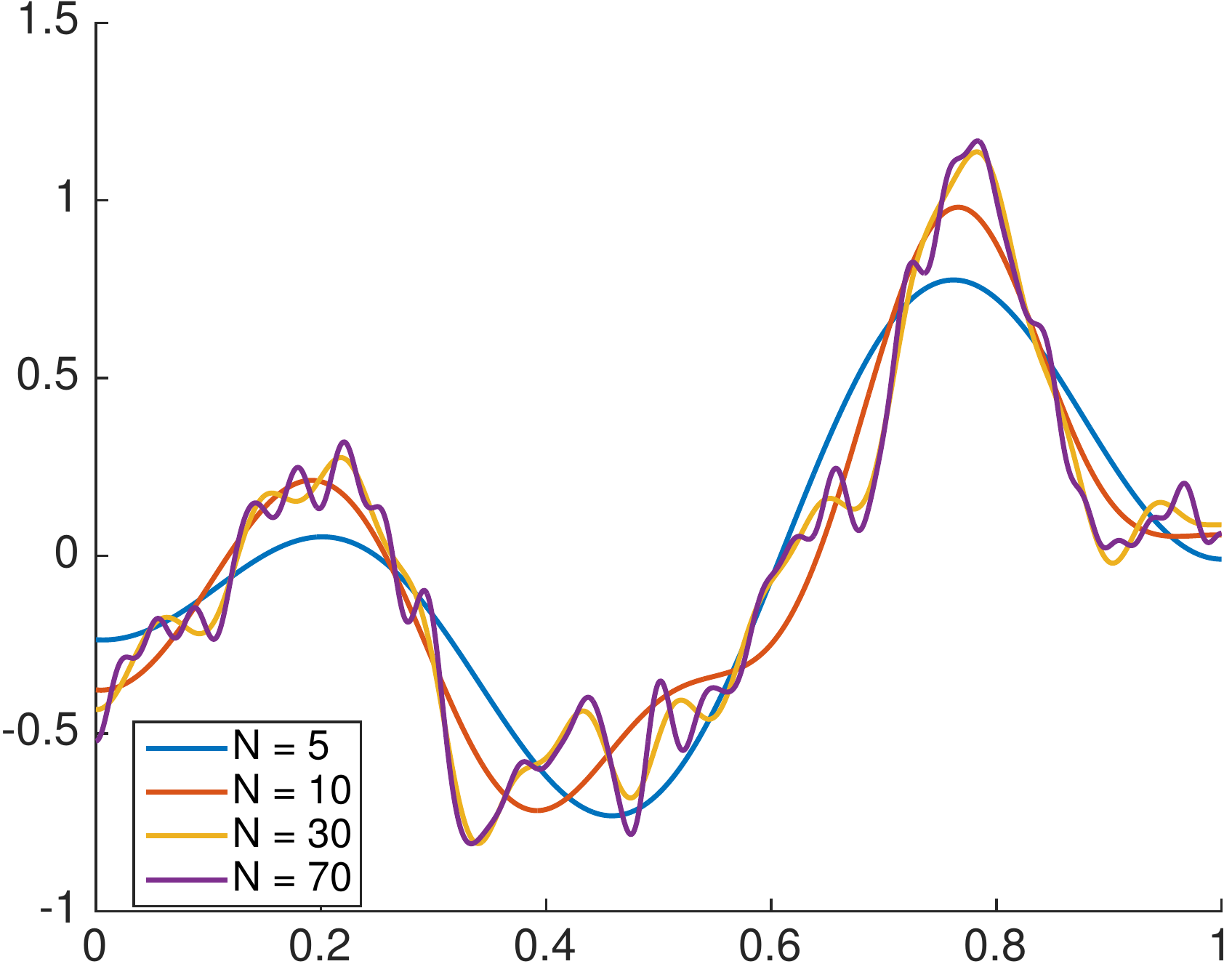} 
\includegraphics[width=0.4\textwidth]{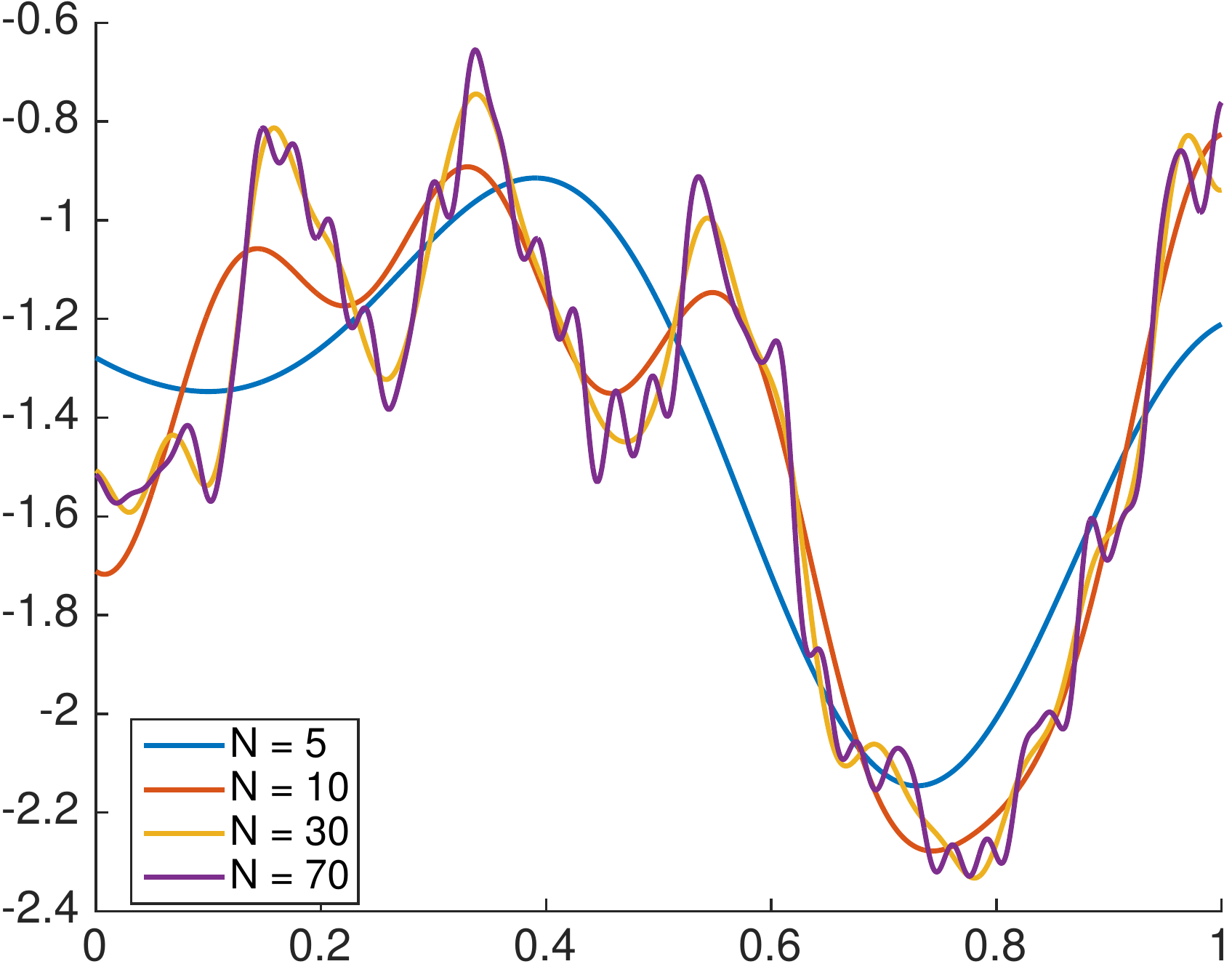} 
\caption{
Two realizations of the random field $X(t, \cdot)$ simulated via a truncated KL expansion. To see  
the influence of the higher order oscillations captured by higher order KL modes, we successively increase the truncation order $N$.}
\label{fig:simulate_ref}
\end{figure}

\clearpage
\bibliographystyle{plain}
\bibliography{KL}

\end{document}